\documentclass{sig-alternate}
\pdfoutput=1

\usepackage{amsmath}    
\usepackage{amsfonts}
\usepackage{mathabx}
\usepackage{graphicx}   
\usepackage{subcaption}
\usepackage{epsfig} 
\usepackage{cancel}
\usepackage{amssymb}
\usepackage{color}
\usepackage{todonotes}

\usepackage{url}
\usepackage{hyperref}
\usepackage[%
    style=numeric-comp,
    sorting=nyt,
    backend=bibtex,
    sortcites=true,
    doi=false,
    firstinits=true,
    hyperref,
    isbn=false,
    eprint=false,
    maxcitenames=3, 
    block=none]
    {biblatex}
    
    \renewbibmacro{in:}{}
    \AtEveryBibitem{%
  	\clearlist{language}%
	}
      
\DeclareBibliographyCategory{needsurl}
\newcommand{\entryneedsurl}[1]{\addtocategory{needsurl}{#1}}
\renewbibmacro*{url+urldate}{%
  \ifcategory{needsurl}{
    \printfield{url}%
    \iffieldundef{urlyear}
      {}
      {\setunit*{\addspace}%
       \printurldate}}
    {}}

\bibliography{references,library}
\entryneedsurl{Mitchell2004}

\newcommand{\RR}{\mathbb{R}}
\newcommand{\K}{\mathcal{K}}
\newcommand{\X}{\mathcal{X}}
\newcommand{\A}{\mathcal{A}}
\newcommand{\B}{\mathcal{B}}
\newcommand{\T}{\mathcal{T}}
\newcommand{\M}{\mathcal{M}}
\newcommand{\C}{\mathcal{C}}
\renewcommand{\AA}{\mathbb{A}}
\newcommand{\BB}{\mathbb{B}}
\newcommand{\TT}{\mathbb{T}}
\newcommand{\KK}{\mathbb{K}}
\newcommand{\XX}{\mathbb{X}}
\newcommand{\MM}{\mathbb{M}}
\newcommand{\x}{p_x}
\newcommand{\y}{p_y}

\newtheorem{theorem}{Theorem}
\newtheorem{lemma}{Lemma}
\newtheorem{corollary}{Corollary}
\newtheorem{remark}{Remark}
\newtheorem{proposition}{Proposition}

\title{\LARGE \bf
Reach-Avoid Problems with Time-Varying Dynamics, Targets and Constraints}
\author{
\alignauthor
Jaime F. Fisac, Mo Chen, Claire J. Tomlin, and S. Shankar Sastry
\thanks{This work has been supported in part by NSF under CPS:ActionWebs (CNS-931843), by ONR under the HUNT (N0014-08-0696) and SMARTS (N00014-09-1-1051) MURIs and by grant N00014-12-1-0609, by AFOSR under the CHASE MURI (FA9550-10-1-0567). The research of J.F. Fisac has received funding from the ``la Caixa" Foundation.}\\
       \affaddr{Department of Electrical Engineering and Computer Sciences}\\
       \affaddr{University of California, Berkeley}\\
       \affaddr{Berkeley, CA 94720, USA}\\
       \email{\{jfisac, mochen72, tomlin, sastry\}@eecs.berkeley.edu}
}

\begin{document}
\maketitle
\thispagestyle{empty}
\pagestyle{empty}

\begin{abstract}
We consider a reach-avoid differential game, in which one of the players aims to steer the system into a target set without violating a set of state constraints, while the other player tries to prevent the first from succeeding; the system dynamics, target set, and state constraints may all be time-varying. The analysis of this problem plays an important role in collision avoidance, motion planning and aircraft control, among other applications. Previous methods for computing the guaranteed winning initial conditions and strategies for each player have either required augmenting the state vector to include time, or have been limited to problems with either no state constraints or entirely static targets, constraints and dynamics. To incorporate time-varying dynamics, targets and constraints without the need for state augmentation, we propose a modified Hamilton-Jacobi-Isaacs equation in the form of a double-obstacle variational inequality, and prove that the zero sublevel set of its viscosity solution characterizes the capture basin for the target under the state constraints. Through this formulation, our method can compute the capture basin and winning strategies for time-varying games at no additional computational cost with respect to the time-invariant case. We provide an implementation of this method based on well-known numerical schemes and show its convergence through a simple example; we include a second example in which our method substantially outperforms the state augmentation approach.
\end{abstract}

\section{Introduction}
Dynamic reach-avoid games have received growing interest in recent years and have many important applications in engineering problems, especially concerning the control of strategic or safety-critical systems: in many scenarios, one must find a control action that will guarantee reaching a desired state while respecting a set of constraints, often in the presence of an unknown disturbance or adversary. Practical applications include collision avoidance, surveillance, energy management and safe reinforcement learning, in which targets typically describe desired waypoints or operating conditions and constraints can model obstacles in the environment or generally forbidden configurations.



In the two-player reach-avoid formulation, one seeks to determine the set of states from which one of the players (the attacker) can successfully drive the system to some target set, while keeping the state within some state constraint set at all times, regardless of the opposing actions of the other player (the defender)---this set is commonly referred to as the \emph{capture basin}, \emph{backwards reachable set} or simply \emph{reach-avoid set} of the target under the constraints. In the absence of state constraints, reachability problems involving possibly time-varying target sets can be posed as a maximum (minimum) cost game where the players try to optimize the pointwise minimum over time of some metric to the target. In this case, the backwards reachable set can be obtained by finding the viscosity solution to the corresponding Hamilton-Jacobi-Isaacs (HJI) equation in the form of a variational inequality: this value function captures the minimum distance to the target that will be achieved by the optimal trajectory starting at each point, so the capture basin is characterized by the region of the state space where this minimum future distance is equal to or less than zero. Maximum cost control problems were studied in detail in \cite{Barron1989}, and extended to the two-player setting in \cite{Barron1990}.
While computationally intensive, Hamilton-Jacobi approaches are practically appealing nowadays due to the availability of modern numerical tools such as \cite{Mitchell2005, sethian96, osher03}, which allow solving the associated equations for problems with low dimensionality. 

If the game is played under state constraints, then the value function generally becomes discontinuous \cite{Rapaport1998, Quincampoix2002}, which leads to numerical issues. In the case of systems with time-invariant dynamics, targets and constraints, the approach in \cite{Bokanowski2010} characterizes the capture basin through an auxiliary value function that solves a modified Hamilton-Jacobi variational inequality. Although the new value function no longer captures the minimum distance from a trajectory to the target, the reach-avoid set is still given by the value function's subzero region. This allows to effectively turn a constrained final cost problem into an unconstrained problem with a maximum cost. 

For problems with time-varying dynamics, targets and constraints, the approach proposed in \cite{Bokanowski2011} as an extension of \cite{Bokanowski2010} requires augmenting the state space with an additional dimension accounting for time; one can then transform time-dependence into state-dependence and apply the above described methods to solve the fixed problem in the space-time state space. Unfortunately, this approach presents a significant drawback, since the complexity of numerical computations is exponential in the problem dimensionality. 

The main contribution of this paper is an extension of the Hamilton-Jacobi reach-avoid formulation to the case where the target set, the state constraint set, and dynamics are allowed to be time-varying, enabling computation of the reach-avoid set at no significant additional cost relative to the time-invariant case. To this end, we formulate a double-obstacle HJI variational inequality, and prove that the zero sublevel set of its viscosity solution characterizes the desired reach-avoid set. 
We also provide a numerical scheme based on \cite{osher91, osher03} and implementation based on \cite{Mitchell2004} to solve the variational inequality and verify the numerical solution using a simple example. We finish by showing that our method vastly outperforms techniques requiring state augmentation.

It should be noted that other authors have recently studied Hamilton-Jacobi equations with a double obstacle in the context of games with lack of information \cite{Cardaliaguet2009} and stochastic games with impulse controls \cite{Cosso2013}. To our knowledge, however, the present work constitutes the first analysis of double-obstacle Hamilton-Jacobi equations in the context of reachability problems.
We also note that the results presented in this paper for differential games are readily applicable to optimal control problems.

\section{Problem Formulation \label{sec:formulation}}
\subsection{System Dynamics}
Let $\A\subset\RR^{n_a}$ and $\B\subset\RR^{n_b}$ be nonempty compact sets and, for $t\le T$, let $\mathbb{A}_t$ and $\mathbb{B}_t$ denote the collections of measurable\footnote{
A function $f:X\to Y$ between two measurable spaces $(X,\Sigma_X)$ and $(Y,\Sigma_Y)$ is said to be measurable if the preimage of a measurable set in $Y$ is a measurable set in $X$, that is: $\forall V\in\Sigma_Y, f^{-1}(V)\in\Sigma_X$, with $\Sigma_X,\Sigma_Y$ $\sigma$-algebras on $X$,$Y$.
}
functions $a: (t,T)\to \A$ and $b: (t,T)\to \B$ respectively. We consider a dynamical system with state $x\in\RR^n$, and two agents, player I and player II, with inputs $a(\cdot)\in\mathbb{A}_t$ and $b(\cdot)\in\mathbb{B}_t$ respectively. The system dynamics are given by the flow field $f: \RR^n \times \A \times \B\times [0,T]\rightarrow\RR^n$, which is assumed to be measurable in $a,b$ and $t$ for each $x$, and bounded and Lipschitz continuous in $x$ for any $a$, $b$ and $t$; that is,
\begin{align}\label{eq:f}
&\exists L> 0: \forall a\in\A, \forall b\in\B,\forall t\in[0,T],\forall x,\tilde{x}\in\RR^n, \\&|f(x,a,b,t)|<L, |f(x,a,b,t)-f(\tilde{x},a,b,t)|\le L|x-\tilde{x}|,\notag
\end{align}
where $|\cdot|$ denotes a norm on $\RR^n$. Then for any initial time $t\in[0,T]$ and state $x$, under input signals $a(\cdot)\in\mathbb{A}_t$, $b(\cdot)\in\mathbb{B}_t$, the evolution of the system is determined (see for example \cite{Coddington1955}, Chapter 2, Theorems 1.1, 2.1) by the unique continuous trajectory $\phi_{x,t}^{a,b}:[t,T]\to\RR^n$ solving
\begin{equation}\label{eq:xdot}
\begin{split}
\dot{x}(s) &= f(x(s),a(s),b(s),s), \text{ a.e. }s\in[t,T],\\
x(t) &= x.
\end{split}
\end{equation}
Note that this is a solution in the \emph{extended sense}, that is, it satisfies the differential equation \emph{almost everywhere} (i.e. except on a subset of Lebesgue measure zero). It will be useful to denote by $\XX_t$ the collection of all trajectories $\phi_{x,t}^{a,b}:[t,T]\to\RR^n$ that solve \eqref{eq:xdot} for some initial condition and  input signals:\[\begin{split}
\XX&_t :=\big\{\phi_{x,t}^{a,b}:[t,T]\to\RR^n, \text{for }x\in\RR^n,a(\cdot)\in\AA_t,b(\cdot)\in\BB_t \;|\;\\& \phi_{x,t}^{a,b}(t)=x, \frac{d}{dt}\phi_{x,t}^{a,b}=f(\phi_{x,t}^{a,b}(s),a(s),b(s),s)\text{ a.e. }s\in[t,T]\big\}.\end{split}\]

\subsection{Target and Constraint Sets}
Let $t = 0$ be the start time of the reachability game and $t=T>0$ be the end time. Following the notation in \cite{Bokanowski2010,Bokanowski2011}, let $d:\RR^n\times 2^{\RR^n}\to\RR$ give the distance between a point $x$ and a set $\M$ under some norm $|\cdot|$ on $\RR^n$, that is, $d(x,\M) :=\inf_{y\in\M} |x-y|$. Further, for every $\M\in\RR^n$, let $d_\M:\RR^n\to\RR$ be the signed distance function to $\M$:
\[d_\M(x) := \begin{cases}d(x,\M),&x\in\RR^n\setminus\M, \\ -d(x,\RR^n\setminus\M),&x\in\M.\end{cases}\]

A set-valued map $\M:[0,T]\to 2^{\RR^n}$ is upper hemicontinuous\footnote{Sometimes also called upper semicontinuous.} if for any open neighborhood $V$ of $\M(t)$ there is an open neighborhood $U$ of $t$ such that $\M(\tau)\subseteq V$ $\forall\tau\in U$. We define the upper hemicontinuous set-valued maps $\T,\K:[0,T]\to 2^{\RR^n}$ which respectively assign a target set $\T_t\subset\RR^n$ and a constraint set $\K_t\subset\RR^n$ to each time $t\in[0,T]$. Requiring that $\T_t,\K_t$ are closed for all $t$, we can construct the space-time sets
\[\mathbb{T}:= \bigcup_{t\in[0,T]} \T_t\times\{t\}, \qquad\mathbb{K}:=\bigcup_{t\in[0,T]} \K_t\times\{t\},\]
which are then closed subsets of $\RR^n\times[0,T]$ by the following lemma.
\begin{lemma}
Let $\M:[0,T]\to 2^{\RR^n}$ be an upper hemicontinuous set-valued map with $\M(t)=\M_t$ closed in $\RR^n$ for all $t\in[0,T]$. Then the set $\MM=\bigcup_{t\in[0,T]} \M_t\times\{t\}$ is closed in $\RR^n\times[0,T]$.
\begin{proof}
We prove the lemma by contradiction using the following characterization: a set is closed if and only if it contains all of its limit points. Suppose $\MM$ is not closed: then there exists a limit point $(x,t)$ of $\MM$ that is not in $\MM$, i.e. $x\not\in\M_t$. Since $\M_t$ is closed, $x\not\in\M_t$ cannot be a limit point of $\M_t$, that is, under any metric on $\RR^n$, there exists $r>0$ such that $\M_t$ and the open ball $B(x,r)$ are disjoint. On the other hand, as $(x,t)$ is a limit point of $\MM$, every open neighborhood of $(x,t)$ must meet $\MM$. In particular, any neighborhood of the form $B(x,r/2)\times U$, with $U$ any open neighborhood of $t$, contains a point $(y,s)\in\MM$, i.e. $y\in\M_s$. But then for the open neighborhood of $\M_t$ given by the Minkowski sum $V:=\M_t + B(0,r/2)$, we have that for all open neighborhoods $U$ of $t$, $\exists s\in U$ such that there is $y\in\M_s\cap B(x,r/2)$, so $y\not\in V$ and therefore $\M_s\not\subseteq V$. This directly contradicts upper hemicontinuity of the set-valued map $\M$.
\end{proof}
\end{lemma}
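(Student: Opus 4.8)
The plan is to prove the contrapositive statement about the complement: I will show that $(\RR^n\times[0,T])\setminus\MM$ is open, which is equivalent to $\MM$ being closed. So I would fix an arbitrary point $(x,t)$ with $(x,t)\notin\MM$, which by the definition of $\MM$ means precisely that $x\notin\M_t$, and then exhibit an open neighborhood of $(x,t)$ disjoint from $\MM$. The natural candidate is a product neighborhood $B(x,\rho)\times U$ for a suitable radius $\rho>0$ and a suitable open neighborhood $U$ of $t$ in $[0,T]$, so the work splits into choosing $\rho$ using closedness of $\M_t$ and choosing $U$ using upper hemicontinuity of $\M$.

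For the first step I would use that $\M_t$ is closed in $\RR^n$: since $x\notin\M_t$, there is $r>0$ with $B(x,r)\cap\M_t=\emptyset$. I then pass to the half-radius ball and set $V:=\RR^n\setminus\overline{B(x,r/2)}$ (equivalently, one could take the Minkowski sum $V:=\M_t+B(0,r/2)$); this $V$ is open, it contains $\M_t$, and it is disjoint from $B(x,r/2)$. The reason for shrinking the radius is exactly to keep $V$ and a ball around $x$ separated, and this is also the one place that needs a little care, because $\M_t$ may be unbounded, so $V$ cannot be taken to be a bounded neighborhood of $\M_t$ — working with the complement of a closed ball (or with a Minkowski sum) is what sidesteps that issue.

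For the second step I invoke upper hemicontinuity of $\M$ at $t$ applied to the open neighborhood $V$ of $\M_t$: this yields an open neighborhood $U$ of $t$ in $[0,T]$ such that $\M_\tau\subseteq V$ for all $\tau\in U$. Then I would verify that $B(x,r/2)\times U$ misses $\MM$: if $(y,s)$ lies in it, then $s\in U$, so $\M_s\subseteq V$, and since $y\in B(x,r/2)$ is disjoint from $V$ we get $y\notin\M_s$, i.e. $(y,s)\notin\MM$. Hence the complement of $\MM$ is open, so $\MM$ is closed. An essentially equivalent route is sequential — take $(x_k,t_k)\in\MM$ with $(x_k,t_k)\to(x,t)$, suppose $x\notin\M_t$, apply upper hemicontinuity to the $(r/2)$-neighborhood of $\M_t$, and contradict $d(x_k,\M_t)\to d(x,\M_t)>0$; this is legitimate because $\RR^n\times[0,T]$ is metrizable, but the complement-open argument avoids even needing that remark. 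The only genuine obstacle is the bookkeeping in the first step, namely choosing $V$ so that it is simultaneously open, a neighborhood of the possibly unbounded set $\M_t$, and separated from a neighborhood of $x$.
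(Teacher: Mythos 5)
Your proof is correct and uses essentially the same construction as the paper: closedness of $\M_t$ gives the separating radius $r$, an open neighborhood $V$ of $\M_t$ disjoint from $B(x,r/2)$ (complement of a closed ball, or equivalently the Minkowski sum the paper uses), and upper hemicontinuity supplies the time neighborhood $U$ so that $B(x,r/2)\times U$ misses $\MM$. The only difference is presentational — you show the complement is open directly, while the paper argues by contradiction with limit points — so the two arguments are the same proof in contrapositive form.
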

The closed sets $\TT$ and $\KK$ can then be implicitly characterized as the subzero regions of two Lipschitz continuous functions $l:\RR^n\times[0,T]\rightarrow\RR$ and $g:\RR^n\times[0,T]\rightarrow\RR$ respectively, that is, $\exists L_l,L_g>0: \forall (x,t),(\tilde{x},\tilde{t})\in\RR^n\times [0,T]$,
\begin{align}\label{eq:lg}
|l(x,t) - l(\tilde{x},\tilde{t})|&\le L_l | (x,t) - (\tilde{x},\tilde{t})|, \notag
\\|g(x,t) - g(\tilde{x},\tilde{t})|&\le L_g | (x,t) - (\tilde{x},\tilde{t})|,
\end{align}
so that
\[(x,t)\in\TT\iff l(x,t)\le0,\qquad(x,t)\in\KK\iff g(x,t)\le0.\]
These functions always exist, since we can simply choose the signed distance functions $l(x,t) = d_{\mathbb{T}}(x,t)$ and $g(x,t) = d_{\mathbb{K}}(x,t)$, which are Lipschitz continuous by construction, i.e. they are the infimum of point-to-point distances.

Note that this definition of targets and constraints is flexible and allows to formulate a variety of target and constraint behaviors, including changing topologies over time (e.g. a target splitting into multiple separate sets or disappearing entirely).

We say that a trajectory $\phi\in\mathbb{X}_t$ is \emph{admissible} on $[t,t+\delta]$ for some $\delta>0$ if for all $t\le\tau\le t+\delta$, it satisfies $\phi(\tau)\in\K_\tau$. The minimum value of $l$ achieved by an admissible state trajectory in the course of the game determines its outcome (it will be negative if the trajectory ever enters the target $\T_t$); we therefore refer to $l$ as the \emph{payoff function}. On the other hand, the maximum value of $g$ reached by any trajectory determines whether or not it is admissible (it will be positive if the trajectory ever breaches the constraints $\K_t$); we call $g$ the \emph{discriminator function}.


\subsection{Value and Strategies}\label{Info}

We will adopt the arbitrary convention that player I seeks to minimize the outcome of the game, while player II tries to maximize it: that is, I is trying to drive the system into the target set, and II wants to prevent I from succeeding, possibly by driving the system out of the constraint set; we will refer to I as the \emph{attacker} and II as the \emph{defender}. For each trajectory $\phi_{x,t}^{a,b}\in\XX_t$ we define the outcome of the game as the functional
\begin{small}
\begin{equation}\label{eq:V}
\mathcal{V}\big(x,t,a(\cdot),b(\cdot)\big) = \min_{\tau\in[t,T]}\max\left\{l(\phi_{x,t}^{a,b}(\tau),\tau),\max_{s\in[t,\tau]} g(\phi_{x,t}^{a,b}(s),s)\right\}.
\end{equation}
\end{small}\noindent
The above expression is considering, for each time $\tau$, the maximum between the current value of $l$ and the greatest value of $g$ reached so far by the trajectory; therefore this term will be smaller of equal to zero for a given $\tau$ if and only if the system is in the target at time $\tau$ without ever having left the constraint set on $[t,\tau]$. If this situation takes place for any $\tau\in[t,T]$, player I wins the game; therefore, the minimum for all $\tau$ reflects whether player I wins at any point between $t$ and the end of the game. We summarize this through the following proposition.
\begin{proposition}\label{Value}
The set of points $x$ at time $t\in[0,T]$ from which the system trajectory $\phi^{a,b}_{x,t}(\cdot)$ under given controls $a(\cdot)\in\mathbb{A}_t,b(\cdot)\in\mathbb{B}_t$ will enter the target set at some time $\tau\in[t,T]$ without violating the constraints at any $s\in[t,\tau]$ is equal to the zero sublevel set of $\mathcal{V}\big(x,t,a(\cdot),b(\cdot)\big)$. That is:
\begin{align}
\{&(x,t)\in\RR^n\times[0,T]: \exists \tau\in[t,T],\notag\\
&\qquad \phi^{a,b}_{x,t}(\tau)\in\T_\tau\; \wedge\; \forall s\in[t,\tau],\phi^{a,b}_{x,t}(s)\in\K_s\}\notag\\
&= \{(x,t)\in\RR^n\times[0,T]: \mathcal{V}\big(x,t,a(\cdot),b(\cdot)\big)\le0\}.
\end{align}
\end{proposition}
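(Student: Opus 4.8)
The statement merely unpacks the nested extremum in the definition \eqref{eq:V} of $\mathcal{V}$, so the plan is to prove the set equality by a double inclusion. I would fix $(x,t)$ and controls $a(\cdot)\in\AA_t$, $b(\cdot)\in\BB_t$, abbreviate $\phi:=\phi^{a,b}_{x,t}$, and introduce
\[
h(\tau) \;:=\; \max\!\left\{\, l(\phi(\tau),\tau),\ \max_{s\in[t,\tau]} g(\phi(s),s)\,\right\},\qquad \tau\in[t,T],
\]
so that $\mathcal{V}(x,t,a(\cdot),b(\cdot)) = \min_{\tau\in[t,T]} h(\tau)$. Throughout I would use the implicit characterizations $l(y,\sigma)\le 0\iff y\in\T_\sigma$ and $g(y,\sigma)\le 0\iff y\in\K_\sigma$ coming from the construction of $l$ and $g$.

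For the inclusion ``$\subseteq$'', given a witness $\tau\in[t,T]$ with $\phi(\tau)\in\T_\tau$ and $\phi(s)\in\K_s$ for all $s\in[t,\tau]$, I would note that $l(\phi(\tau),\tau)\le 0$ and that $\max_{s\in[t,\tau]} g(\phi(s),s)\le 0$ (a maximum of nonpositive numbers over a nonempty compact set), hence $h(\tau)\le 0$; since $\mathcal{V}(x,t,a(\cdot),b(\cdot))\le h(\tau)$ this gives $\mathcal{V}(x,t,a(\cdot),b(\cdot))\le 0$. This direction is immediate.

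For the reverse inclusion ``$\supseteq$'', assuming $\mathcal{V}(x,t,a(\cdot),b(\cdot))\le 0$, I would first argue that the minimum defining $\mathcal{V}$ is attained: $\phi$ is continuous on $[t,T]$ by \eqref{eq:xdot}, $l$ and $g$ are (Lipschitz) continuous by \eqref{eq:lg}, so $s\mapsto g(\phi(s),s)$ is continuous and its running maximum $\tau\mapsto\max_{s\in[t,\tau]}g(\phi(s),s)$ is continuous (and nondecreasing) on $[t,T]$; hence $h$ is continuous on a compact interval and attains its minimum at some $\tau^\ast\in[t,T]$ with $h(\tau^\ast)=\mathcal{V}(x,t,a(\cdot),b(\cdot))\le 0$. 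Then both terms inside the outer maximum at $\tau^\ast$ are $\le 0$, which translates to $\phi(\tau^\ast)\in\T_{\tau^\ast}$ and $\phi(s)\in\K_s$ for all $s\in[t,\tau^\ast]$, exhibiting $\tau^\ast$ as the required witness and placing $(x,t)$ in the left-hand set.

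The only step that is not pure bookkeeping is the attainment of the minimum in the reverse inclusion, which is precisely what makes the boundary case $\mathcal{V}=0$ go through; I expect this to be the main (and only mild) obstacle, and I would dispatch the continuity of the running maximum with the standard estimate $\bigl|\max_{[t,\tau_1]}\psi-\max_{[t,\tau_2]}\psi\bigr|\le\omega_\psi(|\tau_1-\tau_2|)$, valid for any continuous $\psi$ with modulus of continuity $\omega_\psi$, applied to $\psi(s)=g(\phi(s),s)$.
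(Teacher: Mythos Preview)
Your proposal is correct and follows the same logic as the paper, which does not give a formal proof but simply remarks in the paragraph preceding the proposition that the inner $\max$ is $\le 0$ at $\tau$ exactly when the trajectory is in $\T_\tau$ without having left $\K$ on $[t,\tau]$, and that the outer $\min$ then captures whether this occurs for some $\tau$. Your write-up is more careful than the paper on one point: you explicitly justify attainment of the minimum (continuity of $h$ via continuity of $\phi$, $l$, $g$ and of the running maximum), which is what makes the boundary case $\mathcal{V}=0$ go through in the reverse inclusion and is tacitly assumed in the paper's use of $\min$ rather than $\inf$ in \eqref{eq:V}.
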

Note that this value function is negative when the trajectory starting at $(x,t)$ reaches the target without \emph{previously} breaching the constraints: it is agnostic to whether constraints are breached \emph{after} the target has been reached. One could formulate an alternative problem requiring that trajectories remain feasible for the entire duration of the game: in that case the game's outcome would instead be
\begin{small}
\begin{equation}\label{eq:W}
\mathcal{W}\big(x,t,a(\cdot),b(\cdot)\big) =\max\left\{ \min_{\tau\in[t,T]}l(\phi_{x,t}^{a,b}(\tau),\tau),\max_{\tau\in[t,T]} g(\phi_{x,t}^{a,b}(s),s)\right\}.
\end{equation}
\end{small}\noindent
This alternative problem is not the object of this paper, and we will restrict our attention to the problem described by \eqref{eq:V}. 

Following \cite{Varaiya1967,Roxin1969,Elliott1972,Evans1984}, we define the set of nonanticipative strategies for player II containing the functionals\\$\Lambda_t = \{\beta:\mathbb{A}_t\to\mathbb{B}_t\;|\;\forall s\in[t,T],\; \forall a(\cdot),\hat{a}(\cdot)\in\mathbb{A}_t,\; \big(a(\tau) = \hat{a}(\tau)\text{ a.e.} \tau\in[t,s]\big)\Rightarrow \big(\beta[a](\tau) = \beta[\hat{a}](\tau)\text{ a.e.} \tau\in[t,s]\big)\}$. By allowing II to use nonanticipative strategies, we are giving it a certain advantage, since at each instant it can adapt its control input to the one declared by I. This information pattern leads to the upper value of the game, given by:
\begin{subequations}\label{eq:Values}
\begin{align}
&\label{eq:Upper}
{V}^+(x,t):=\sup_{\beta(\cdot)\in\Lambda_t}\inf_{a(\cdot)\in\mathbb{A}_t}\mathcal{V}\big(x,t,a(\cdot),\beta[a](\cdot)\big).
\intertext{Analogously, we can decide to give I the advantage by defining its set of nonanticipative strategies as $\Gamma_t := \{\alpha: \mathbb{B}_t\to\mathbb{A}_t \;|\; \forall s\in[t,T],\;\forall b(\cdot),\hat{b}(\cdot)\in\mathbb{B}_t,\; \big(b(\tau) = \hat{b}(\tau)\text{ a.e.} \tau\in[t,s]\big) \Rightarrow \big(\alpha[b](\tau) = \alpha[\hat{b}](\tau)\text{ a.e.} \tau\in[t,s]\big)\}$. This determines the lower value of the game as:}
&\label{eq:Lower}
{V}^-(x,t):=\inf_{\alpha(\cdot)\in\Gamma_t}\sup_{b(\cdot)\in\mathbb{B}_t}\mathcal{V}\big(x,t,\alpha[b](\cdot),b(\cdot)\big).\end{align}
\end{subequations}
Naturally, it follows that ${V}^-(x,t)\le{V}^+(x,t)$ everywhere. In those cases in which equality holds, the game is said to have value and $V(x,t) := {V}^+(x,t)={V}^-(x,t)$ is simply referred to as the \emph{value of the game}.

Given an information pattern, we say that a point $(x,t)$ is in the \emph{capture basin} (or \emph{reach-avoid set}) $\C_{\TT,\KK}$ of the target $\TT$ under constraints $\KK$ when the system trajectory $\phi^{a,b}_{x,t}$, with both players acting optimally, reaches $\TT$ at some time $\tau\in[t,T]$ while remaining in $\KK$ for all time $s\in[t,\tau]$. In particular, when player II uses nonanticipative strategies,
\begin{subequations}
\begin{align}
\C&_{\TT,\KK}^+ := 
\{(x,t)\in\RR^n\times[0,T]: \exists a(\cdot)\in\AA_t, \forall \beta(\cdot)\in\Gamma_t, \\
&\exists \tau\in[t,T], \phi^{a,\beta[a]}_{x,t}(\tau)\in\T_\tau\; \wedge\; \forall s\in[t,\tau],\phi^{a,\beta[a]}_{x,t}(s)\in\K_s\},\notag
\end{align}
and similarly, when player I uses nonanticipative strategies,
\begin{align}
\C&_{\TT,\KK}^- := 
\{(x,t)\in\RR^n\times[0,T]: \exists \alpha(\cdot)\in\Lambda_t, \forall b(\cdot)\in\BB_t, \\
&\exists \tau\in[t,T], \phi^{\alpha[b],b}_{x,t}(\tau)\in\T_\tau\; \wedge\; \forall s\in[t,\tau],\phi^{\alpha[b],b}_{x,t}(s)\in\K_s\}.\notag
\end{align}
\end{subequations}
Given Proposition \ref{Value} and the above definitions, we have an important result expressed by the following proposition.
\begin{proposition}
The capture basin of the space-time target set $\mathbb{T}$ when the defender (attacker) is allowed to use nonanticipative strategies is given by the zero sublevel set of the upper (resp. lower) value function $V^\pm$. That is:
\begin{subequations}
\begin{align}
\C_{\TT,\KK}^+&= \{(x,t)\in\RR^n\times[0,T]: V^+(x,t)\le0\},\\
\C_{\TT,\KK}^-&= \{(x,t)\in\RR^n\times[0,T]: V^-(x,t)\le0\}.
\end{align}
\end{subequations}
\end{proposition}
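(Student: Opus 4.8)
\textit{Proof plan.}

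The plan is to route everything through the pointwise statement of Proposition~\ref{Value} and then collapse the trajectory-wise winning condition into the outer optimizations that define $V^\pm$. The two halves of the claim are mirror images of one another --- interchange the two players, and correspondingly the outer $\sup$ and $\inf$ --- so I would carry out the argument for $\C^-_{\TT,\KK}$ and $V^-$ in detail and only indicate the dual steps for $\C^+_{\TT,\KK}$ and $V^+$.

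First I would fix $(x,t)$, an admissible control $b(\cdot)\in\BB_t$, and a nonanticipative strategy $\alpha\in\Gamma_t$. Since $\alpha[b](\cdot)\in\AA_t$, the pair $\big(\alpha[b](\cdot),b(\cdot)\big)$ is an admissible input pair, so $\phi^{\alpha[b],b}_{x,t}\in\XX_t$ and Proposition~\ref{Value} applies to it: the trajectory reaches $\T_\tau$ at some $\tau\in[t,T]$ while satisfying $\phi(s)\in\K_s$ for every $s\in[t,\tau]$ if and only if $\mathcal{V}\big(x,t,\alpha[b](\cdot),b(\cdot)\big)\le 0$. (I would also note here that, $\phi^{\alpha[b],b}_{x,t}$ being continuous on the compact interval $[t,T]$ and $l,g$ being continuous, the minimum over $\tau$ in \eqref{eq:V} is attained.) Substituting this equivalence into the definition of $\C^-_{\TT,\KK}$ converts its geometric winning condition into one on $\mathcal{V}$, and since a supremum is $\le 0$ exactly when each of its terms is,
\[(x,t)\in\C^-_{\TT,\KK}\ \iff\ \exists\,\alpha\in\Gamma_t:\ \sup_{b(\cdot)\in\BB_t}\mathcal{V}\big(x,t,\alpha[b](\cdot),b(\cdot)\big)\le 0.\]
From the right-hand side, $\C^-_{\TT,\KK}\subseteq\{V^-\le 0\}$ is immediate, since any such $\alpha$ witnesses $V^-(x,t)=\inf_\alpha\sup_b\mathcal{V}\le 0$; and if $V^-(x,t)<0$, any $\alpha$ within $|V^-(x,t)|$ of the infimum provides the witness for the reverse inclusion. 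The treatment of $\C^+_{\TT,\KK}$ and $V^+$ is the same, now moving the outer $\sup$ over $\Lambda_t$ through the winning condition, with the easy inclusion relying only on the minimax inequality $\sup_\beta\inf_a\le\inf_a\sup_\beta$.

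The one place where the proof is not a formal manipulation of quantifiers --- and the step I expect to be the main obstacle --- is the reverse inclusion on the boundary set $\{V^\pm=0\}$: there a witnessing strategy must be produced, yet the infimum (resp.\ supremum) over the infinite-dimensional set of nonanticipative strategies need not a priori be attained. If one grants --- as is implicit in the informal definition of the capture basin through optimal play --- that an optimal strategy exists, this case is immediate as well. Otherwise I would discharge it by a compactness argument: the outcome depends on a strategy only through the resulting trajectory, and by the bound and Lipschitz hypotheses \eqref{eq:f} the reachable trajectories form a uniformly bounded, uniformly Lipschitz family, hence precompact in the uniform topology; a sequence of ever-better strategies then yields, along a uniformly convergent subsequence of trajectories, a limit trajectory that still satisfies the closed condition $\mathcal{V}\le 0$, and hence a winning strategy. (Alternatively, one can invoke the dynamic programming principle for $V^\pm$ that follows from the Hamilton--Jacobi characterization developed later in the paper to construct an optimal strategy directly.) Once the witness is in hand, $\{V^\pm\le 0\}\subseteq\C^\pm_{\TT,\KK}$ follows, and the $V^+$ case is obtained from the $V^-$ case verbatim by swapping the players and the order of the outer $\sup$ and $\inf$.
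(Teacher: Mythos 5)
Your main line --- translating the winning condition through Proposition~\ref{Value} and then handling the outer optimizations, with a near-optimal strategy serving as witness when the value is strictly negative --- is essentially the reasoning the paper itself relies on: the paper gives no written proof of this proposition, presenting it as an immediate consequence of Proposition~\ref{Value} and the definitions of $\C^\pm_{\TT,\KK}$ and $V^\pm$. For the easy inclusions and for the case $V^\pm(x,t)<0$ your write-up is correct and more explicit than the paper.

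The two places where you go beyond that bookkeeping, however, both contain genuine gaps. First, the boundary case $V^-(x,t)=0$: the compactness sketch does not produce the required object. A near-optimal strategy $\alpha_n$ induces not one trajectory but a whole family $\{\phi^{\alpha_n[b],b}_{x,t}:b(\cdot)\in\BB_t\}$; extracting a uniformly convergent subsequence for each fixed $b$ (i) need not yield a curve in $\XX_t$ generated by an admissible control unless the velocity sets $f(x,\A,b,t)$ are convex (otherwise the limit is only a relaxed/Filippov trajectory), and (ii) even if it did, the limiting controls are selected per $b$, along possibly different subsequences, and nothing makes the assembled map $b\mapsto a_b$ nonanticipative, let alone a single strategy $\alpha$ achieving $\mathcal{V}\le 0$ against every $b$ simultaneously. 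Exact attainment of the infimum over $\Gamma_t$ is a nontrivial matter needing extra hypotheses; the paper glosses over this, but your proposed repair does not close it either (and the parenthetical appeal to the later dynamic programming principle is circular in spirit, since that machinery is built on top of the value function, not on an exhibited optimal strategy). Second, the claimed verbatim symmetry for the $+$ half fails under the paper's stated definitions: $\C^+_{\TT,\KK}$ asks for a single open-loop control $a$ that wins against every strategy $\beta$ --- equivalently against every $b(\cdot)\in\BB_t$, since for fixed $a$ the values $\beta[a]$ exhaust $\BB_t$ --- which is essentially $\inf_a\sup_b\mathcal{V}\le 0$ with attainment, whereas $V^+(x,t)\le 0$ only gives $\sup_\beta\inf_a\mathcal{V}\le 0$. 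The easy inclusion does follow from the minimax inequality, as you note, but the hard inclusion $\{V^+\le 0\}\subseteq\C^+_{\TT,\KK}$ would require the reverse inequality (existence of value, e.g.\ under an Isaacs-type condition), which cannot be obtained by mirroring the $\C^-$ argument, where the witness is the very object the outer optimization ranges over. If you suspect, reasonably, that the quantifier order in the paper's definition of $\C^+_{\TT,\KK}$ is not the intended one, that should be stated and resolved explicitly rather than absorbed into an appeal to symmetry.
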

\begin{corollary}
The capture basin when the defender is allowed to use nonanticipative strategies is a subset of that resulting from the attacker using nonanticipative strategies:
\begin{equation}\C_{\TT,\KK}^+ \subseteq \C_{\TT,\KK}^-.\end{equation}
\end{corollary}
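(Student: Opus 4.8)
The plan is to read the inclusion off directly from results already in place, so there is very little to do. The preceding proposition identifies the two capture basins with the zero sublevel sets of the value functions, $\C_{\TT,\KK}^+=\{(x,t):V^+(x,t)\le0\}$ and $\C_{\TT,\KK}^-=\{(x,t):V^-(x,t)\le0\}$, and immediately after \eqref{eq:Values} we recorded the pointwise ordering $V^-(x,t)\le V^+(x,t)$ for all $(x,t)$. So I would take an arbitrary $(x,t)\in\C_{\TT,\KK}^+$, use the preceding proposition to get $V^+(x,t)\le0$, chain $V^-(x,t)\le V^+(x,t)\le0$, and invoke the same proposition once more to conclude $(x,t)\in\C_{\TT,\KK}^-$. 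Since $(x,t)$ was arbitrary, this yields $\C_{\TT,\KK}^+\subseteq\C_{\TT,\KK}^-$.

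I would also include a direct, constructive version of the argument that bypasses the value functions, since it makes transparent why the inclusion holds. Suppose $(x,t)\in\C_{\TT,\KK}^+$, and let $a^\star(\cdot)\in\AA_t$ be an open-loop control witnessing this: for every admissible nonanticipative response of player~II applied to $a^\star$, the resulting trajectory reaches $\TT$ without previously leaving $\KK$. As a witness for $(x,t)\in\C_{\TT,\KK}^-$ I would take the constant strategy for player~I given by $\alpha[b](\cdot):=a^\star(\cdot)$ for every $b(\cdot)\in\BB_t$, which is trivially nonanticipative. Given any $b(\cdot)\in\BB_t$, the constant map $\beta[a](\cdot):=b(\cdot)$ is an admissible nonanticipative strategy for player~II with $\beta[a^\star]=b$, hence $\phi^{\alpha[b],b}_{x,t}=\phi^{a^\star,b}_{x,t}=\phi^{a^\star,\beta[a^\star]}_{x,t}$ reaches $\TT$ feasibly by the defining property of $a^\star$. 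Thus $\alpha$ is a valid witness and $(x,t)\in\C_{\TT,\KK}^-$.

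There is no genuinely hard step here: the entire content of the corollary is the elementary fact that a nonanticipative strategy is never weaker than open-loop play (equivalently, the ordering $V^-\le V^+$), and that fact is already available. The only thing a fully self-contained treatment would need to add is the standard ``maximin $\le$ minimax'' justification of $V^-\le V^+$ for Elliott--Kalton strategies, which is orthogonal to the present statement; in fact the constant-strategy embedding used above is exactly the observation that underlies one inclusion of that classical inequality.
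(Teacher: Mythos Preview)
Your proposal is correct and your first argument is exactly the (implicit) reasoning the paper intends: the corollary is stated without proof because it follows immediately from the preceding proposition identifying $\C_{\TT,\KK}^\pm$ with the zero sublevel sets of $V^\pm$ together with the earlier observation that $V^-\le V^+$ everywhere. Your second, constructive argument via the constant-strategy embedding is a welcome addition that makes the inclusion self-contained, but it goes beyond what the paper provides.
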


\section{The Double-Obstacle Isaacs \\Equation}\label{Results}
%
%
\begin{figure*}[ht]
\begin{subequations}\label{eq:DPP}
\begin{align}
V(x,t)^+ = \sup_{\beta\in\Lambda_t}\inf_{a\in\mathbb{A}_t}\Bigg\{ &\min\bigg[\min_{\tau\in[t,t+\delta]} \max\Big(l(\phi_{x,t}^{a,\beta[a]}(\tau),\tau),\max_{s\in[t,\tau]}g(\phi_{x,t}^{a,\beta[a]}(s),s)\Big),\notag\\
&\max\Big(V(\phi_x^{a,\beta[a]}(t+\delta),t+\delta),\max_{\tau\in[t,t+\delta]}g(\phi_{x,t}^{a,\beta[a]}(\tau),\tau)\Big)\bigg] \Bigg\}.\label{eq:DPP_upper}\\
\notag\\
V(x,t)^- = \inf_{\alpha\in\Gamma_t}\sup_{b\in\mathbb{B}_t}\Bigg\{ &\min\bigg[\min_{\tau\in[t,t+\delta]} \max\Big(l(\phi_{x,t}^{a,\beta[a]}(\tau),\tau),\max_{s\in[t,\tau]}g(\phi_{x,t}^{a,\beta[a]}(s),s)\Big),\notag\\
&\max\Big(V(\phi_x^{a,\beta[a]}(t+\delta),t+\delta),\max_{\tau\in[t,t+\delta]}g(\phi_{x,t}^{a,\beta[a]}(\tau),\tau)\Big)\bigg] \Bigg\}.\label{eq:DPP_lower}
\end{align}
\end{subequations}
\end{figure*}
%
%
It has been shown that the value function for minimum payoff games can be characterized as the unique viscosity solution to a variational inequality involving an appropriate Hamiltonian \cite{Barron1989,Barron1990}, which has commonly been referred to as a Hamilton-Jacobi equation with an obstacle. We now extend the results for minimum cost problems to the category of problems with a cost in the form of \eqref{eq:V}. 

We first state the particular form of Bellman's principle of optimality \cite{Bellman1957} for the problem at hand.
\begin{lemma}[Dynamic Programming Principle]\label{DPP}Let\\ $0\le t < T$ and $0<\delta\le T-t$. Then equation \eqref{eq:DPP} holds.


\end{lemma}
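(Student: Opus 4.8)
The plan is to first establish a purely open-loop version of \eqref{eq:DPP}, valid for each fixed pair of control signals, by splitting the horizon $[t,T]$ at the intermediate time $t+\delta$, and then to lift this identity to the game by concatenating nonanticipative strategies (and near-optimal control responses) at $t+\delta$. I would argue for the upper value $V^+$; the statement for $V^-$ then follows from the symmetric construction with the roles of the two players exchanged (the same time-splitting applies verbatim).

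\emph{Open-loop decomposition.} Fix $a(\cdot)\in\AA_t$ and $b(\cdot)\in\BB_t$, write $\phi:=\phi_{x,t}^{a,b}$, and set $h(\tau):=\max\{l(\phi(\tau),\tau),\max_{s\in[t,\tau]}g(\phi(s),s)\}$, so that $\mathcal{V}(x,t,a,b)=\min_{\tau\in[t,T]}h(\tau)$ by \eqref{eq:V}. Splitting the outer minimum as $\min\{\min_{\tau\in[t,t+\delta]},\min_{\tau\in[t+\delta,T]}\}$, the first group of times contributes exactly $P_\delta(a,b):=\min_{\tau\in[t,t+\delta]}\max\{l(\phi(\tau),\tau),\max_{s\in[t,\tau]}g(\phi(s),s)\}$, i.e.\ the first bracketed term of \eqref{eq:DPP}. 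For $\tau\ge t+\delta$ one writes $\max_{s\in[t,\tau]}g(\phi(s),s)=\max\{Q_\delta(a,b),\max_{s\in[t+\delta,\tau]}g(\phi(s),s)\}$ with $Q_\delta(a,b):=\max_{s\in[t,t+\delta]}g(\phi(s),s)$ constant in $\tau$, and uses the elementary identity $\min_\tau\max\{c,F(\tau)\}=\max\{c,\min_\tau F(\tau)\}$ to pull $Q_\delta$ out of the minimum; by uniqueness of solutions of \eqref{eq:xdot} (\cite{Coddington1955}) the trajectory obeys the flow property $\phi|_{[t+\delta,T]}=\phi_{\phi(t+\delta),t+\delta}^{a,b}$, so the remainder of the second group is precisely $\mathcal{V}(\phi(t+\delta),t+\delta,a,b)$. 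This yields the pointwise identity
\[\mathcal{V}(x,t,a,b)=\min\big[\,P_\delta(a,b),\ \max\{\,\mathcal{V}(\phi(t+\delta),t+\delta,a,b),\,Q_\delta(a,b)\,\}\,\big].\]

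\emph{Passing to the game.} By nonanticipativity of $\beta\in\Lambda_t$, the quantities $P_\delta(a,\beta[a])$, $Q_\delta(a,\beta[a])$ and the state $\phi_{x,t}^{a,\beta[a]}(t+\delta)$, hence the entire integrand on the right of \eqref{eq:DPP_upper}, depend on $a$ only through its restriction $a^1$ to $[t,t+\delta]$. For the inequality ``$\le$'': fix $\beta\in\Lambda_t$ and $\varepsilon>0$; for each $a^1$ the tail of $\beta$ induces a strategy $\beta^2_{a^1}\in\Lambda_{t+\delta}$, against which one picks a control $a^2_{a^1}$ with $\mathcal{V}(\phi^1,t+\delta,a^2_{a^1},\beta^2_{a^1}[a^2_{a^1}])\le V^+(\phi^1,t+\delta)+\varepsilon$, where $\phi^1$ is the intermediate state; substituting the concatenation $a:=a^1\oplus a^2_{a^1}$ into the open-loop identity and using that $z\mapsto\min\{c,z\}$ and $z\mapsto\max\{c,z\}$ are nondecreasing and $1$-Lipschitz shows that $\inf_a\mathcal{V}(x,t,a,\beta[a])$ is at most the right side of \eqref{eq:DPP_upper} plus $\varepsilon$. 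For the inequality ``$\ge$'': given an arbitrary nonanticipative strategy $\beta^1$ on $[t,t+\delta]$, attach to it, for each reachable intermediate state, an $\varepsilon$-optimal strategy for the value at that state and time $t+\delta$; the resulting glued map is again a member of $\Lambda_t$, and the open-loop identity with the same monotonicity and Lipschitz facts shows $V^+(x,t)$ is at least the right side of \eqref{eq:DPP_upper} minus $\varepsilon$. Letting $\varepsilon\downarrow 0$ in both directions gives \eqref{eq:DPP_upper}, and the parallel argument with $\Gamma_t$ in place of $\Lambda_t$ gives \eqref{eq:DPP_lower}.

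\emph{Expected main obstacle.} The delicate part is the concatenation of nonanticipative strategies (and of the near-optimal responses) at $t+\delta$: one must verify that the glued object is still nonanticipative, which forces the attached tail strategy to depend on the control history on $[t,t+\delta]$ only through that history — equivalently, only through the state reached at $t+\delta$ — and in some treatments this is made rigorous through a measurable-selection argument; this is the standard technical core of dynamic-programming proofs for differential games (cf.\ \cite{Evans1984}). A related point to respect throughout is that the suprema and infima in \eqref{eq:Values} need not be attained, which is precisely why $\varepsilon$-optimal strategies and controls are used at every stage and the limit $\varepsilon\downarrow 0$ is deferred to the very end.
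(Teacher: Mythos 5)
Your proof is correct, but it takes a genuinely different (and more rigorous) route than the paper. The paper's own argument is an informal verification by inspection: it reasons about how the outcome \eqref{eq:V} propagates backward along the optimal trajectory, splitting it into the local term $V_{[t,t+\delta]}$ and the propagated term $\max\{V_{[t+\delta,T]},\max_{[t,t+\delta]}g\}$, and simply asserts that the minimum of the two gives the value. You instead prove the identity in the standard two-step fashion of Evans--Souganidis: first an exact open-loop decomposition of $\mathcal{V}$ at the splitting time $t+\delta$ (using $\min_\tau\max\{c,F(\tau)\}=\max\{c,\min_\tau F(\tau)\}$ to pull out the running maximum $Q_\delta$ of $g$, plus the flow property of trajectories), and then the two inequalities for $V^+$ via restriction/concatenation of nonanticipative strategies with $\varepsilon$-optimal tails. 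The open-loop identity is exactly right and is, in effect, a precise statement of what the paper argues verbally; your game-theoretic lifting supplies the rigor the paper omits, at the cost of the concatenation/measurable-selection technicality, which you correctly identify as the delicate core (one small quibble: the glued tail strategy need not depend on the history on $[t,t+\delta]$ only through the state at $t+\delta$ --- depending on the whole restriction of $a$ to $[t,t+\delta]$ is still nonanticipative; it is the $\varepsilon$-optimal tails, indexed by the intermediate state, where selection issues arise). In short, the paper buys brevity and intuition; your version buys a complete proof skeleton that would withstand scrutiny, and nothing in it conflicts with the paper's statement or its use in Theorem \ref{DOHJI}.
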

\begin{proof} The correctness of this lemma can be verified by inspection of \eqref{eq:DPP}, considering how the value in \eqref{eq:V} is propagated back in time as per \eqref{eq:Values} along the \emph{characteristic} (optimal trajectory) in all possible cases. The first term in the outer minimum of \eqref{eq:DPP} is the local application of the definition in \eqref{eq:V} restricted to the interval $[t,t+\delta]$,
\begin{equation}
V_{[t,t+\delta]}:=\!\!\min_{\tau\in[t,t+\delta]} \!\!\max\Big(l(\phi_{x,t}^{a,b}(\tau),\tau),
\max_{s\in[t,\tau]}g(\phi_{x,t}^{a,b}(s),s)\Big).
\end{equation}
The minimum outcome achieved in the whole of $[t,T]$, however, will also be a function of the future value of \eqref{eq:V} throughout the remainder of the game after $[t+\delta]$, captured by
\begin{equation}
V_{[t+\delta,T]}:=V(\phi_{x,t}^{a,b}(t+\delta),t+\delta).
\end{equation}
Now, if for all $\tau\in[t,t+\delta]$, $g(\phi_{x,t}^{a,b}(\tau),\tau)\le V_{[t+\delta,T]}$, then from \eqref{eq:V} it will clearly be that $V(x,t) = \min\{V_{[t,t+\delta]},V_{[t+\delta,T]}\}$. The future value that is propagated along the characteristic, however, will be altered if anywhere on $[t,t+\delta]$, $g$ exceeds $V_{[t+\delta,T]}$, in which case the maximum of $g$ along the characteristic between $t$ and $t+\delta$ will be propagated instead. Thus the second term in the outer minimum of \eqref{eq:DPP} is 
\begin{equation}
V_{t\leftarrow[t+\delta,T]}:=\max\big(V_{[t+\delta,T]},\max_{\tau\in[t,t+\delta]}g(\phi_{x,t}^{a,b}(\tau),\tau)\big).
\end{equation}
The resulting value at $(x,t)$ is therefore determined by the minimum of the local element and this last term, that is:
\begin{subequations}\label{eq:DPP_compact}
\begin{align}
&V^+(x,t) =  \sup_{\beta\in\Lambda_t}\inf_{a\in\mathbb{A}_t}\min\{V_{[t,t+\delta]},V_{t\leftarrow[t+\delta,T]}\},\\
& V^-(x,t) = \inf_{\alpha\in\Gamma_t}\sup_{b\in\mathbb{B}_t}\min\{V_{[t,t+\delta]},V_{t\leftarrow[t+\delta,T]}\}.
\end{align}
\end{subequations}
The statement in \eqref{eq:DPP_compact} is a more compact form of \eqref{eq:DPP}.\qed
\end{proof}
We introduce the \emph{upper} and \emph{lower Hamiltonians} $H^\pm$:
\begin{subequations}\label{eq:Hamiltonians}
\begin{align}
&H^+(x,p,t) = \min_{a\in\A}\max_{b\in\B} \;f(x,a,b,t)\cdot p,\\
&H^-(x,p,t) = \max_{b\in\B}\min_{a\in\A} \;f(x,a,b,t)\cdot p.
\end{align}
\end{subequations}

The following theorem constitutes the main theoretical contribution of this paper; it shows that the value function $V^\pm$ is the viscosity solution of a particular variational inequality that has the form of a Hamilton-Jacobi-Isaacs equation with a double obstacle.

\begin{theorem}\label{DOHJI}
Assume $f$ satisfies \eqref{eq:f}, and that $l(x,t),g(x,t)$ are globally Lipschitz continuous. Then the value function ${V}^\pm(x,t)$ for the game with outcome given by \eqref{eq:V} is the unique viscosity solution of the variational inequality
\begin{subequations}\label{eq:HJI}
\begin{align}
\max\bigg\{&\min\Big\{\partial_t V + H^\pm\left(x, D_x V,t\right),l(x,t)-V(x, t)\Big\},\notag\\
& g(x,t)-V(x,t)\bigg\}=0, \quad t\in[0,T], x\in\RR^n,\label{eq:HJI_variational}\\
\intertext{with terminal condition}
&V(x,T) = \max\big\{l(x,T),g(x,T)\big\},  \quad x\in\RR^n.\label{eq:HJI_boundary}
\end{align}
\end{subequations}
\end{theorem}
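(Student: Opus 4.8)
The plan is to prove the two halves of the statement separately: that $V^\pm$ is a viscosity solution of \eqref{eq:HJI}, and that \eqref{eq:HJI} admits at most one viscosity solution; together these give that $V^\pm$ is \emph{the} solution. The whole argument rests on two elementary inequalities read off from \eqref{eq:V}: evaluating the $\min$ over $\tau$ at $\tau=t$ gives $V^\pm(x,t)\le\max\{l(x,t),g(x,t)\}$, and $\max_{s\in[t,\tau]}g(\phi_{x_0,t_0}^{a,b}(s),s)\ge g(x,t)$ for every $\tau$ gives $V^\pm(x,t)\ge g(x,t)$. Hence $g\le V^\pm\le\max\{l,g\}$ everywhere, so whenever $V^\pm(x,t)>g(x,t)$ one automatically has $V^\pm(x,t)\le l(x,t)$: the two obstacles in \eqref{eq:HJI_variational} are never simultaneously active, and the equation degenerates to the Isaacs equation $\partial_t V+H^\pm=0$ only on the open set $\{g<V^\pm<l\}$. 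Regularity is standard: \eqref{eq:f} and Gronwall's lemma make $\phi_{x,t}^{a,b}$ Lipschitz in $(x,t)$ uniformly in the controls, \eqref{eq:lg} then makes $\mathcal V$ Lipschitz in $(x,t)$ uniformly in the controls, and the $\sup$/$\inf$ in \eqref{eq:Values} preserve this, so $V^\pm$ is Lipschitz with linear growth; the terminal condition \eqref{eq:HJI_boundary} is immediate since at $t=T$ the only admissible $\tau$ is $\tau=T$.

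For the viscosity-solution property I would use the dynamic programming principle, Lemma \ref{DPP}, in the compact form \eqref{eq:DPP_compact}. Fix a $C^1$ test function $\varphi$ touching $V^\pm$ at an interior point $(x_0,t_0)$ with $t_0<T$; one must verify the sub/super-solution inequalities for the operator in \eqref{eq:HJI_variational}, which, being non-increasing in $V$, carries the sign convention that makes comparison well-posed. The $g$-obstacle terms are dispatched by the bounds above (for the supersolution inequality one uses that $V^\pm\ge g$ everywhere; for the subsolution inequality the only nontrivial case is $V^\pm(x_0,t_0)=g(x_0,t_0)$). There remains to show, when $g(x_0,t_0)<V^\pm(x_0,t_0)<l(x_0,t_0)$, that $\partial_t\varphi+H^\pm$ has the correct sign at $(x_0,t_0)$ (the boundary cases $V^\pm=l$ are no harder). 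For small $\delta$ the trajectory stays in $\{g<V^\pm<l\}$, so in \eqref{eq:DPP_compact} the term $V_{[t_0,t_0+\delta]}$ is dominated by $V_{t_0\leftarrow[t_0+\delta,T]}$ and the latter reduces to $V^\pm(\phi_{x_0,t_0}^{a,b}(t_0+\delta),t_0+\delta)$; thus \eqref{eq:DPP_compact} collapses to the ordinary one-step optimality relation for the upper (resp. lower) value with Hamiltonian $H^+$ (resp. $H^-$). From here the classical Evans--Souganidis/Crandall--Lions computation applies: one inequality follows by letting the ``disadvantaged'' player use a constant control against a near-optimal nonanticipative strategy of the other, differentiating $\varphi$ along the trajectory and sending $\delta\to0$; the reverse inequality follows by using a measurable selection to synthesize a nonanticipative strategy that pushes $\varphi$ in the desired direction, and then invoking \eqref{eq:DPP_compact} for a contradiction. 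This establishes that $V^\pm$ solves \eqref{eq:HJI}.

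Uniqueness I would obtain from a comparison principle: every viscosity subsolution $u$ of \eqref{eq:HJI_variational} with $u(\cdot,T)\le\max\{l(\cdot,T),g(\cdot,T)\}$ lies below every viscosity supersolution $v$ with $v(\cdot,T)\ge\max\{l(\cdot,T),g(\cdot,T)\}$; applied to two solutions this forces $u=v$. Assume $\sup(u-v)>0$. Using a doubling of variables in $(x,t)$ with quadratic penalization, a slowly growing penalty to control the non-compact domain (legitimate since $|H^\pm(x,p,t)|\le L|p|$ by \eqref{eq:f}), and a $\lambda(T-t)$ perturbation to make the subsolution strict, localize the penalized supremum at nearby interior base points $(\hat x,\hat t),(\hat y,\hat s)$ with $|\hat x-\hat y|,|\hat t-\hat s|\to0$ as the penalty parameter $\epsilon\to0$. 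The subsolution inequality gives either $u(\hat x,\hat t)\le g(\hat x,\hat t)$ --- and since supersolutions of \eqref{eq:HJI_variational} always satisfy $v\ge g$, this forces $u(\hat x,\hat t)-v(\hat y,\hat s)\le L_g|(\hat x,\hat t)-(\hat y,\hat s)|\to0$, contradicting $\sup(u-v)>0$ --- or else $u(\hat x,\hat t)\le l(\hat x,\hat t)$ together with $\partial_t\varphi+H^\pm\ge\lambda$ on the $u$-side. In the latter situation the supersolution inequality in turn gives either $v(\hat y,\hat s)\ge l(\hat y,\hat s)$, yielding the same contradiction with $L_l$ in place of $L_g$, or $\partial_t\varphi+H^\pm\le0$ on the $v$-side; in this last case the test derivatives on the two sides have nearly equal gradients at nearby base points, and the uniform Lipschitz continuity of $H^\pm$ in $x$ inherited from \eqref{eq:f} makes $\lambda\le o(1)$ as $\epsilon\to0$, a contradiction --- precisely the classical Hamilton--Jacobi comparison argument. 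Hence $u\le v$.

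The step I expect to be the real obstacle is the comparison principle --- not because of the double obstacle, which as just sketched contributes only the two \emph{easy} cases, but because the doubling/penalization apparatus must be assembled carefully on the unbounded space-time domain, and because it tacitly uses continuity of $H^\pm(x,p,\cdot)$ in $t$; under the bare ``measurable in $t$'' hypothesis on $f$ one should either strengthen that assumption (as is customary) or route the argument through mollified/relaxed controls. A lesser technical point is the measurable-selection step in the viscosity-solution proof, which implicitly uses continuity of $f$ in the control variables.
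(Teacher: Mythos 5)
Your proposal is correct in outline and, for the heart of the theorem---showing that $V^\pm$ solves \eqref{eq:HJI}---it follows essentially the same route as the paper: Lemma \ref{DPP} plus smooth test functions touching $V^\pm$ from above/below at $(x_0,t_0)$, and a contradiction obtained by propagating the test function along trajectories over a short horizon $\delta$, in the style of \cite{Evans1984}. The organizational difference is that you first extract the a priori bounds $g\le V^\pm\le\max\{l,g\}$ from \eqref{eq:V} and use them to deactivate the obstacles away from their contact sets, reducing everything to the pure Isaacs computation on the open set $\{g<V^\pm<l\}$ (with the cases $V^\pm=g$ or $V^\pm=l$ handled trivially or identically); the paper instead keeps the double-obstacle structure throughout and runs a direct case analysis on how the sub-/supersolution inequality could fail (its $\theta_1,\theta_2,\theta_3$ argument), feeding each case back into \eqref{eq:DPP}. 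Both are sound; your reduction is conceptually cleaner but does need the Lipschitz continuity of $V^\pm$, which you establish, to justify the collapse of \eqref{eq:DPP_compact} to the unconstrained one-step relation on $\{g<V^\pm<l\}$. Where you genuinely go beyond the paper is uniqueness: the paper simply invokes the comparison theorem of \cite{Barron1989} (stated there for a single-obstacle equation), whereas you sketch the doubling-of-variables comparison adapted to the double obstacle, in which the two obstacle branches are dispatched using the Lipschitz continuity of $l$ and $g$ in \eqref{eq:lg} and the fact that supersolutions dominate $g$; this is the correct adaptation and makes the uniqueness half self-contained rather than cited. Your closing caveat is also well taken and applies equally to the paper's own argument: both proofs keep $\partial_t\psi+H^\pm$ within $\theta/2$ of its value at $(x_0,t_0)$ along trajectories on $[t_0,t_0+\delta]$, which tacitly uses continuity of $f$ (hence of $H^\pm$) in $t$, not merely the measurability assumed in \eqref{eq:f}.
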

\begin{proof}[of Theorem \ref{DOHJI}]
The structure of the proof follows the classical approach in \cite{Evans1984} and draws from viscosity solution theory. In every case, we start by assuming that $V^\pm$ is not a viscosity solution of the HJI equation and derive a contradiction of the principle of optimality stated in Lemma \ref{DPP}. We will prove the theorem for $V^+$ with Hamiltonian $H^+$; the proof for $V^-$ with $H^-$ is analogous and is not presented here.

First, from the definition of $V^+$ in \eqref{eq:V},\eqref{eq:Values}, considering the terminal case $t=T$, it is clear that it satisfies the boundary condition \eqref{eq:HJI_boundary}.

A continuous function is a viscosity solution of a partial differential equation if it is both a \emph{subsolution} and a \emph{supersolution} (defined below). We will first prove that $V^+$ is a viscosity subsolution of \eqref{eq:HJI_variational}. Let $\psi\in C^1(\RR^n\times(0,T))$ such that $V^+ - \psi$ attains a local maximum at $(x_0,t_0)$; without loss of generality, assume that this maximum is 0. We say that $V^+$ is a subsolution of \eqref{eq:HJI_variational} if, for any such $\psi$,
\begin{small}
\begin{align}\label{eq:subsolution}
\max\bigg\{&\min\Big\{\partial_t \psi(x_0, t_0) + H^+\left(x_0, D_x \psi,t_0\right),l(x_0,t_0)-\psi(x_0, t_0)\Big\},\nonumber\\&g(x_0,t_0)-\psi(x_0, t_0)\bigg\} \ge 0.
\end{align}
\end{small}\noindent
From the condition of local maximum, we know
\[\label{eq:local_max}V^+(\phi_{x_0,t_0}^{a,b}(t_0+\delta),t_0+\delta) \le \psi(\phi_{x_0,t_0}^{a,b}(t_0+\delta),t_0+\delta),\]
for sufficiently small $\delta>0$ and all $a(\cdot)\in\mathbb{A}_{t_0},b(\cdot)\in\mathbb{B}_{t_0}$. For conciseness, we will write $\phi_{x_0,t_0}^{a,b}(t_0+\delta)$ as simply $\phi(t_0+\delta)$ whenever statements hold for all inputs $a(\cdot),b(\cdot)$.

For the sake of contradiction, suppose \eqref{eq:subsolution} is false. Then it must be that
\begin{equation}\label{eq:not_sub_1}
g(x_0,t_0) = \psi(x_0,t_0) - \theta_1,\qquad\qquad\qquad 
\end{equation}
and, in addition, at least one of the following holds:
\begin{subequations}\label{eq:not_sub_2}
\begin{align}
& l(x_0,t_0) = \psi(x_0,t_0)-\theta_2\label{eq:not_sub_2a},\\
&\partial_t\psi(x_0,t_0) + H^+\left(x_0,D_x\psi,t_0\right) = -\theta_3 \label{eq:not_sub_2b},
\end{align}
\end{subequations}
for some $\theta_1,\theta_2,\theta_3>0$. If \eqref{eq:not_sub_1} and \eqref{eq:not_sub_2a} are true, then by continuity of $g$, $l$ and system trajectories, there exists a sufficiently small $\delta>0$ such that for all inputs $a(\cdot)$,$b(\cdot)$ and for all $t_0\le\tau\le t_0+\delta$,
\[\begin{split}
g(\phi(\tau), \tau) &\le \psi(x_0,t_0) - \frac{\theta_1}{2} = V^+(x_0,t_0) - \frac{\theta_1}{2},\\
l(\phi(\tau), \tau) &\le\psi(x_0,t_0) - \frac{\theta_2}{2} = V^+(x_0,t_0) - \frac{\theta_2}{2}.
\end{split}
\]
Then, incorporating this into the dynamic programming principle \eqref{eq:DPP_upper} we have
\begin{small}
\[\begin{aligned}V^+(x_0,t_0) &\le \sup_{\beta\in\Lambda_t}\inf_{a\in\mathbb{A}_t}\Big\{&&\hspace{-2.5cm} \min_{\tau\in[t_0,t+\delta]} \max\Big[l(\phi_{x_0,t_0}^{a,\beta[a]}(\tau),\tau),\\&&&\hspace{-1.2cm}\max_{s\in[t_0,\tau]}g(\phi_{x_0,t_0}^{a,\beta[a]}(s),s)\Big]\Big\}\\
&\le V^+(x_0,t_0) - \min\Big\{\frac{\theta_1}{2},\frac{\theta_2}{2}\Big\},
\end{aligned} \]
\end{small}\noindent
which is a contradiction, since $\theta_1,\theta_2>0$.

Similarly, if \eqref{eq:not_sub_1} and \eqref{eq:not_sub_2b} are true, then for a small enough $\delta>0$ we have, for all $t_0\le\tau\le t_0 + \delta$ and for all nonanticipative strategies $\beta(\cdot)$,
\[\partial_t\psi(\phi_{x_0,t_0}^{a,\beta[a]}(\tau),\tau) + H^+\left(\phi_{x_0,t_0}^{a,\beta[a]}(\tau),D_x\psi,\tau\right) \le -\frac{\theta_3}{2},\]
for some input $a\in\mathbb{A}(t_0)$. Integrating on $[t_0,t_0+\delta]$ gives
\[\psi(\phi(t_0+\delta), t_0+\delta) - \psi(x_0,t_0) \le - \frac{\theta_3}{2}\delta,\]
and recalling that $V^+ - \psi$ has a local maximum at $(x_0,t_0)$, we obtain
\[V^+(\phi(t_0+\delta), t_0+\delta) \le V^+(x_0,t_0) - \frac{\theta_3}{2}\delta.\]

Inspecting \eqref{eq:DPP_upper} in this case, we obtain
\begin{small}
\[\begin{aligned}V^+(x_0,t_0) &\le \sup_{\beta\in\Lambda_t}\inf_{a\in\mathbb{A}_t}\Big\{&&\hspace{-2.6cm}\max \Big[  V^+(\phi_{x_0,t_0}^{a,\beta[a]}(t_0+\delta),t_0+\delta),\\&&&\hspace{-1.8cm}\max_{\tau\in[t_0,t_0+\delta]}g(\phi_{x_0,t_0}^{a,\beta[a]}(\tau),\tau)\Big]\Big\}\\
&\le V^+(x_0,t_0) - \min\Big\{\frac{\theta_1}{2},\frac{\theta_3}{2}\delta\Big\},
\end{aligned} \]
\end{small}\noindent
which again is a contradiction, since $\theta_1,\theta_3,\delta>0$. Therefore, we conclude that \eqref{eq:subsolution} must be true and hence $V^+$ is indeed a subsolution of \eqref{eq:HJI_variational}.

We now proceed to show that $V^+$ is also a viscosity supersolution of \eqref{eq:HJI_variational}, that is, for all $\psi\in C^1(\RR^n\times(0,T))$ such that $V^+ - \psi$ attains a local minimum at $(x_0,t_0)$ (again, we can assume for convenience that this minimum is 0), it holds that
\begin{small}\begin{align}\label{eq:supersolution}
\max\bigg\{&\min\Big\{\partial_t \psi(x_0, t_0) + H^+\left(x_0, D_x \psi,t_0\right),l(x_0,t_0)-\psi(x_0, t_0)\Big\},\notag\\&g(x_0,t_0)-\psi(x_0, t_0)\bigg\} \le 0.
\end{align}\end{small}\noindent
If we suppose that \eqref{eq:supersolution} is false, then either it holds that
\begin{equation}\label{eq:not_super_1}
g(x_0,t_0) = \psi(x_0,t_0) + \theta_1,\qquad\qquad\qquad 
\end{equation}
or both of the following are true:
\begin{subequations}\label{eq:not_super_2}
\begin{align}
& l(x_0,t_0) = \psi(x_0,t_0)+\theta_2\label{eq:not_super_2a},\\
&\partial_t\psi(x_0,t_0) + H^+\left(x_0,D_x\psi,t_0\right) = \theta_3 \label{eq:not_super_2b},
\end{align}
\end{subequations}
for some $\theta_1,\theta_2,\theta_3>0$.

If \eqref{eq:not_super_1} holds, then there is a small enough $\delta>0$ such that for all trajectories starting at $(x_0,t_0)$ and all $t_0\le\tau\le t_0+\delta$
\[g(\phi(\tau), \tau) \ge \psi(x_0,t_0) + \frac{\theta_1}{2} = V^+(x_0,t_0) + \frac{\theta_1}{2}.\]
Then the dynamic programming principle \eqref{eq:DPP_upper} yields
\begin{small}
\[\begin{aligned}V^+(x_0,t_0) &\ge \sup_{\beta\in\Lambda_t}\inf_{a\in\mathbb{A}_t}\Big\{&&\hspace{-1cm}\min \Big[  \min_{\tau\in[t_0,t_0+\delta]}\max_{s\in[t_0,\tau]}g(\phi_{x_0,t_0}^{a,\beta[a]}(s),s),\\&&&\hspace{-0.25cm}\max_{\tau\in[t_0,t_0+\delta]}g(\phi_{x_0,t_0}^{a,\beta[a]}(\tau),\tau)\Big]\Big\}\\
&\ge V^+(x_0,t_0) + \frac{\theta_1}{2},
\end{aligned} \]
\end{small}\noindent
which is a contradiction, as $\theta_1>0$.

If, on the other hand, \eqref{eq:not_super_2} holds, then there is a small enough $\delta>0$ such that
\[l(\phi(\tau), \tau) \le\psi(x_0,t_0) + \frac{\theta_2}{2} = V^+(x_0,t_0) + \frac{\theta_2}{2},\]
and, for some strategy $\beta\in\Lambda(t_0)$ and all inputs $a\in\mathbb{A}(t_0)$,
\[\begin{split}\frac{\theta_3}{2}\delta &\le \psi(\phi_{x_0,t_0}^{a,\beta[a]}(t_0+\delta), t_0+\delta) - \psi(x_0,t_0) \\&\le V^+(\phi_{x_0,t_0}^{a,\beta[a]}(t_0+\delta), t_0+\delta) - V^+(x_0,t_0),\end{split}\]
with the latter inequality obtained from \eqref{eq:not_super_2b} by integration on $[t_0,t_0+\delta]$. With this, \eqref{eq:DPP_upper} gives
\begin{small}
\[\begin{aligned}V^+(x_0,t_0) &\ge \sup_{\beta\in\Lambda_t}\inf_{a\in\mathbb{A}_t}\Big\{&&\hspace{-2.5cm}\min \Big[  \min_{\tau\in[t_0,t_0+\delta]}l(\phi_{x_0,t_0}^{a,\beta[a]}(\tau),\tau),\\&&&\hspace{-1.6cm}V^+(\phi_{x_0,t_0}^{a,\beta[a]}(t_0+\delta),t_0+\delta)\Big]\Big\}\\
&\ge V^+(x_0,t_0) + \min\Big\{\frac{\theta_2}{2}, \frac{\theta_3}{2}\delta\Big\},
\end{aligned} \]
\end{small}\noindent
resulting in another contradiction, as $\theta_2,\theta_3,\delta>0$. We therefore conclude that it must be that \eqref{eq:supersolution} holds and $V^+$ is a supersolution of \eqref{eq:HJI_variational}.

Since we have shown that $V^+$ is both a viscosoty subsolution and a viscosity supersolution of the variational inequality, this completes the proof that $V^+$ is a viscosity solution of \eqref{eq:HJI} with Hamiltonian $H^+$. Uniqueness follows from the classical comparison and uniqueness theorems for viscosity solutions (see Theorem 4.2 in \cite{Barron1989}).\qed

\end{proof}
\begin{remark}
As in previous work \cite{Evans1984,Barron1989,Barron1990}, the assumptions in this theorem are stronger than necessary, and can be relaxed. The assumption of Lipschitz continuity of $f$, $l$ and $g$ leads to Lipschitz continuity of $V^\pm$, which then satisfies \eqref{eq:HJI} almost everywhere; if we only assumed uniform continuity, then $V^\pm$ would still satisfy \eqref{eq:HJI} in the viscosity sense. 
\end{remark}


\section{Numerical Implementation} \label{sec:numerical}
We present in this section a numerical method to compute the value function \eqref{eq:Values} for the time-varying reach-avoid problem, based on the result in Theorem \ref{DOHJI}. For conciseness, we drop the distinction between upper and lower values and Hamiltonians, as the method is equally applicable to either.

Let $\mathbf{i}\in I$ denote the index of the grid point in a discretized computational domain of a compact subset $\X\subset\RR^n$ and let ${k}\in\{1,...,n\}$ denote the index of each discrete time step in a finite interval $[0,T]$. Since our computation will proceed in backward time, we will let $T=t_0>t_1>...>t_n = 0$. To numerically solve the variational inequality (\ref{eq:HJI}), we use the following procedure, based on a three-step update rule:
%

%
\begin{figure}[!h]
	\centering
	\includegraphics[trim=15.5mm 135mm 108mm 70mm, clip=true,width=0.5\textwidth]{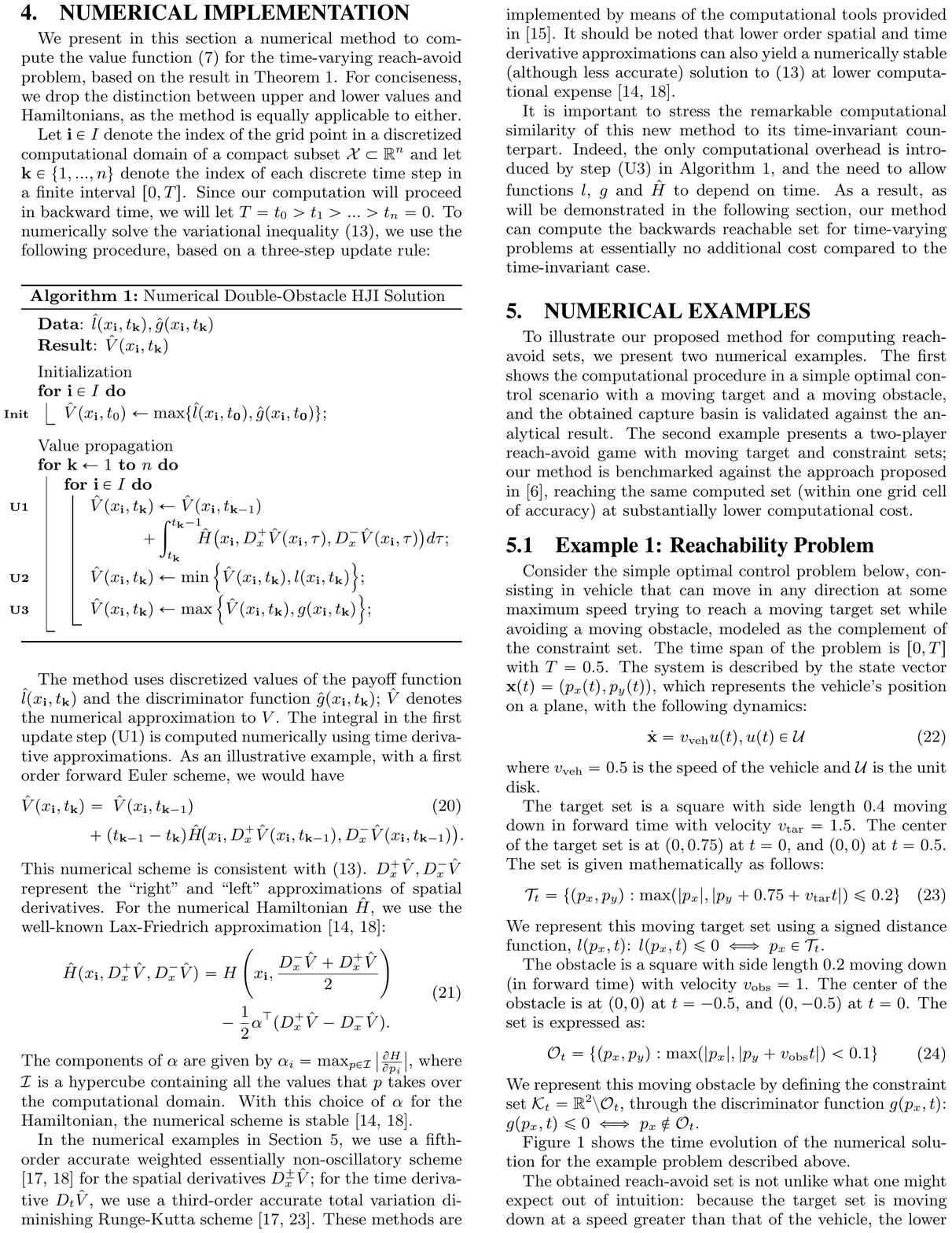}
\end{figure}
The method uses discretized values of the payoff function $\hat{l}(x_\mathbf{i},t_{k})$ and the discriminator function $\hat{g}(x_\mathbf{i},t_{k})$; $\hat{V}$ denotes the numerical approximation to $V$. The integral in the first update step (U1) is computed numerically using time derivative approximations. As an illustrative example, with a first order forward Euler scheme, we would have
\begin{align}
\hat{V}(x_\mathbf{i},t_{k}) &=\; \hat{V} (x_\mathbf{i},t_{{k}-1})\\
&+(t_{{k}-1}-t_{{k}})\hat{H}\!\big(x_\mathbf{i}, D^+_x\hat{V}(x_\mathbf{i},t_{{k}-1}), D^-_x\hat{V}(x_\mathbf{i},t_{{k}-1})\big). \notag
\end{align}
The numerical scheme of Algorithm 
\ref{alg:HJI}
is consistent with (\ref{eq:HJI}). $D^+_x\hat{V}, D^-_x\hat{V}$ represent the ``right" and ``left" approximations of spatial derivatives. For the numerical Hamiltonian $\hat{H}$, we use the Lax-Friedrich approximation \cite{osher91, mitchell-thesis}:
\begin{equation}
\begin{aligned}
\hat{H}(x_\mathbf{i}, D^+_x\hat{V}, D^-_x\hat{V}) =& \;H\left(x_\mathbf{i}, \frac{D^-_x\hat{V}+D^+_x\hat{V}}{2}\right) \\
&-\frac{1}{2} \alpha^\top (D^+_x\hat{V}-D^-_x\hat{V}).
\end{aligned}
\end{equation}
The components of $\alpha$ are given by $\alpha_i = \max_{p\in\mathcal{I}} \big|\frac{\partial H}{\partial p_i} \big|$, where $\mathcal{I}$ is a hypercube containing all the values that $p$ takes over the computational domain. With this choice of $\alpha$ for the Hamiltonian, the numerical scheme is stable \cite{osher91, mitchell-thesis}.

In the numerical examples in Section \ref{sec:example}, we use a fifth-order accurate weighted essentially non-oscillatory scheme \cite{osher91,osher03} for the spatial derivatives $D^\pm_x\hat{V}$; for the time derivative $D_t \hat{V}$, we use a third-order accurate total variation diminishing Runge-Kutta scheme \cite{osher03, shu88}. These methods are implemented by means of the computational tools provided in \cite{Mitchell2004}. It should be noted that lower order spatial and time derivative approximations can also yield a numerically stable (although less accurate) solution to (\ref{eq:HJI}) at lower computational expense \cite{osher91, mitchell-thesis}.

It is important to stress the remarkable computational similarity of this new method to its time-invariant counterpart. Indeed, the only computational overhead is introduced by step (U3) in Algorithm
 \ref{alg:HJI},
and the need to allow functions $l$, $g$ and $\hat{H}$ to depend on time. As a result, as will be demonstrated in the following section, our method can compute the backwards reachable set for time-varying problems at essentially no additional cost compared to the time-invariant case.

Lastly, the optimal action for each player is implicitly obtained in solving the minimax to compute the Hamiltonian $\hat{H}$ in step (U1). It follows from Algorithm \ref{alg:HJI} that, starting inside a player's winning region (the reach-avoid set for the attacker and its complement for the defender), applying this optimal action at each state as a feedback policy yields a guaranteed winning strategy for the reach-avoid game.



\section{Numerical Examples }\label{sec:example}
To illustrate our proposed method for computing reach-avoid sets, we present two numerical examples. The first shows the computational procedure in a simple optimal control scenario with a moving target and a moving obstacle, and the obtained capture basin is validated against the analytical result. The second example presents a two-player reach-avoid game with moving target and constraint sets; our method is benchmarked against the approach proposed in \cite{Bokanowski2011}, reaching the same computed set (within one grid cell of accuracy) at substantially lower computational cost.

\subsection{Example 1: Reachability Problem}
Consider the simple optimal control problem below, consisting of a vehicle that can move in any direction at some maximum speed trying to reach a moving target set while avoiding a moving  obstacle, modeled as the complement of the constraint set. The time span of the problem is $[0,T]$ with $T=0.5$. The system is described by the state vector $\mathbf{x}(t)=(\x(t), \y(t))$, which represents the vehicle's position on a plane, with the following dynamics:
\begin{equation}
\label{eq:dyn}
\dot{\mathbf{x}}=v_{\text{veh}}u(t), \quad u(t)\in \mathcal{U},
\end{equation}
where $v_{\text{veh}}=0.5$ is the speed of the vehicle and $\mathcal{U}$ is the unit disk.

The target set is a square with side length $0.4$ moving down in forward time with velocity $v_{\text{tar}}=1.5$. The center of the target set is at $(0, 0.75)$ at $t=0$, and $(0, 0)$ at $t=0.5$. The set is given mathematically as follows:
\begin{equation}
\begin{aligned}
\mathcal{T}_t &= \{(\x,\y): \max (|\x|,|\y-0.75 + v_{\text{tar}} t| )\le0.2 \}.\\
\end{aligned}
\end{equation}
We represent this moving target set using a signed distance function, $l(\x,t)$: $l(\x,t)\le 0 \iff \x\in \mathcal{T}_t$.

The obstacle is a square with side length $0.2$ moving down (in forward time) with velocity $v_{\text{obs}}=1$. The center of the obstacle is at $(0, 0)$ at $t=0$, and $(0, -0.5)$ at $t=0.5$. The set is expressed as:
\begin{equation}
\begin{aligned}
\mathcal{O}_t &= \{(\x,\y): \max (|\x|,|\y + v_{\text{obs}}t| )<0.1 \}. \\
\end{aligned}
\end{equation}
We represent this moving obstacle by defining the constraint set $\K_t=\RR^2\setminus\mathcal{O}_t$, through the discriminator function $g(\x,t)$: $g(\x,t)\le 0 \iff \x\notin \mathcal{O}_t$.

Figure \ref{fig:reach_time} shows the time evolution of the numerical solution for the example problem described above. 

The obtained reach-avoid set is not unlike what one might expect out of intuition: because the target set is moving down at a speed greater than that of the vehicle, the lower boundary of the capture basin for $t=0.45$ consists of states from which the vehicle can meet the target set at its final position. This lower boundary directly below the target moves down in backward time (as the vehicle has more time to get to this final position), but eventually gets ``blocked" by the obstacle ($t=0.3$). For earlier times ($t=0.1$), the boundary is ``pinched inwards" again, including nearby states from which the vehicle can move around the obstacle to get to the target; yet, there remains a triangular region directly below the obstacle, shown in the $t=0$ subplot, that is not part of the reach-avoid set, because starting from those states the vehicle is unable to avoid the obstacle that is moving down. The diagonal boundaries of the capture basin at its upper region are formed by those states from which the vehicle can meet the target set between its initial and final positions.  Lastly, the target set itself is part of the reach-avoid set, since a vehicle starting inside the target (which in turn is fully inside the constraint set) has immediately succeeded in reaching it through an admissible trajectory.
\begin{figure}
	\centering
	\includegraphics[trim=19mm 3mm 10mm 10mm, clip=true,width=0.5\textwidth]{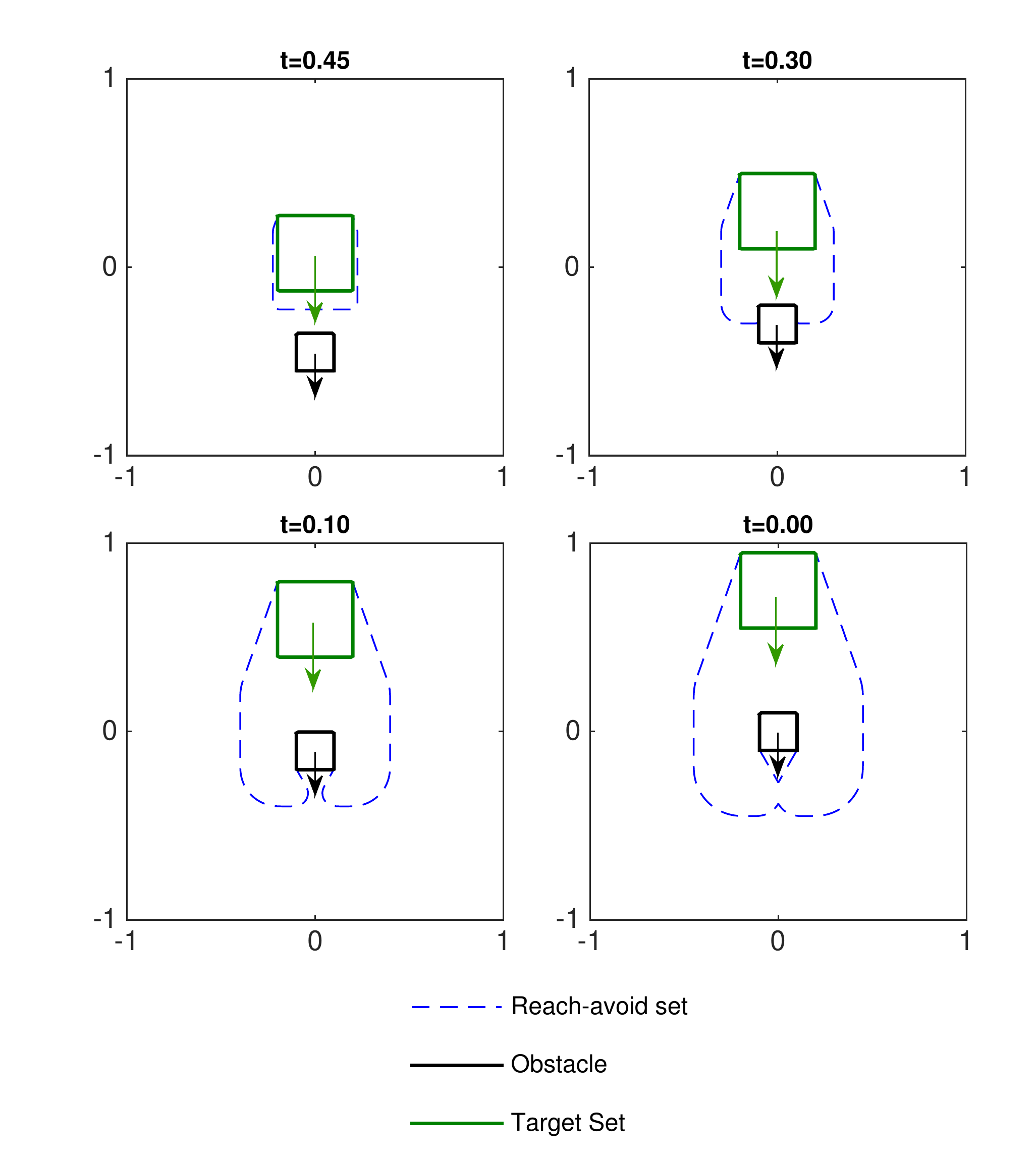}
	\caption{Time evolution of the reach-avoid set for a problem with a target (large square) moving down at speed 1.5, and an obstacle (small square) moving down at speed 1. The inside of the dashed boundary represents the set of states that can reach the target set while avoiding the obstacle.}
	\label{fig:reach_time}
\end{figure}

\subsubsection{Analytic Solution }\label{subsec:analytic}
The reach-avoid set boundary for this example problem can be computed analytically, and thereby compared against the numerically obtained boundary. Because the problem is symmetric about the $\y$ axis, we will consider the capture basin in the region $\x\le0$. We now derive the analytic boundary by considering several different segments separately; it will convenient to refer to Figure \ref{fig:reach} below.

The optimal path for a vehicle with initial position on segment \ref{seg:top} of the capture basin boundary is a straight trajectory, perpendicular to the segment, that reaches the upper corner of the moving target at some intermediate position.  Segment \ref{seg:top} is continued by a short arc \ref{seg:lostarc} comprising initial states from which the vehicle can follow a straight path reaching this top corner exactly at the target's final position. For a vehicle starting on segments \ref{seg:sideleft}, \ref{seg:bottomleft} and \ref{seg:bottomflat} the optimal action is to take the shortest path to the closest point of the target's final position, which will be reached at exactly the final time. The optimal action for a vehicle with initial position on segment \ref{seg:bottomobs} is similar: it must follow a straight line to barely miss the obstacle, before redirecting its path to the target, reaching it at the final time. Finally, a vehicle initially within the triangular region enclosed by segment \ref{seg:obs} and the obstacle cannot avoid being hit by the obstacle. Based on these considerations, the expression for each of these segments can be geometrically derived, leading to the following:

\begin{figure}[h]
	\centering
	\includegraphics[trim=10mm 59mm 0mm 33mm, clip=true,width=0.5\textwidth]{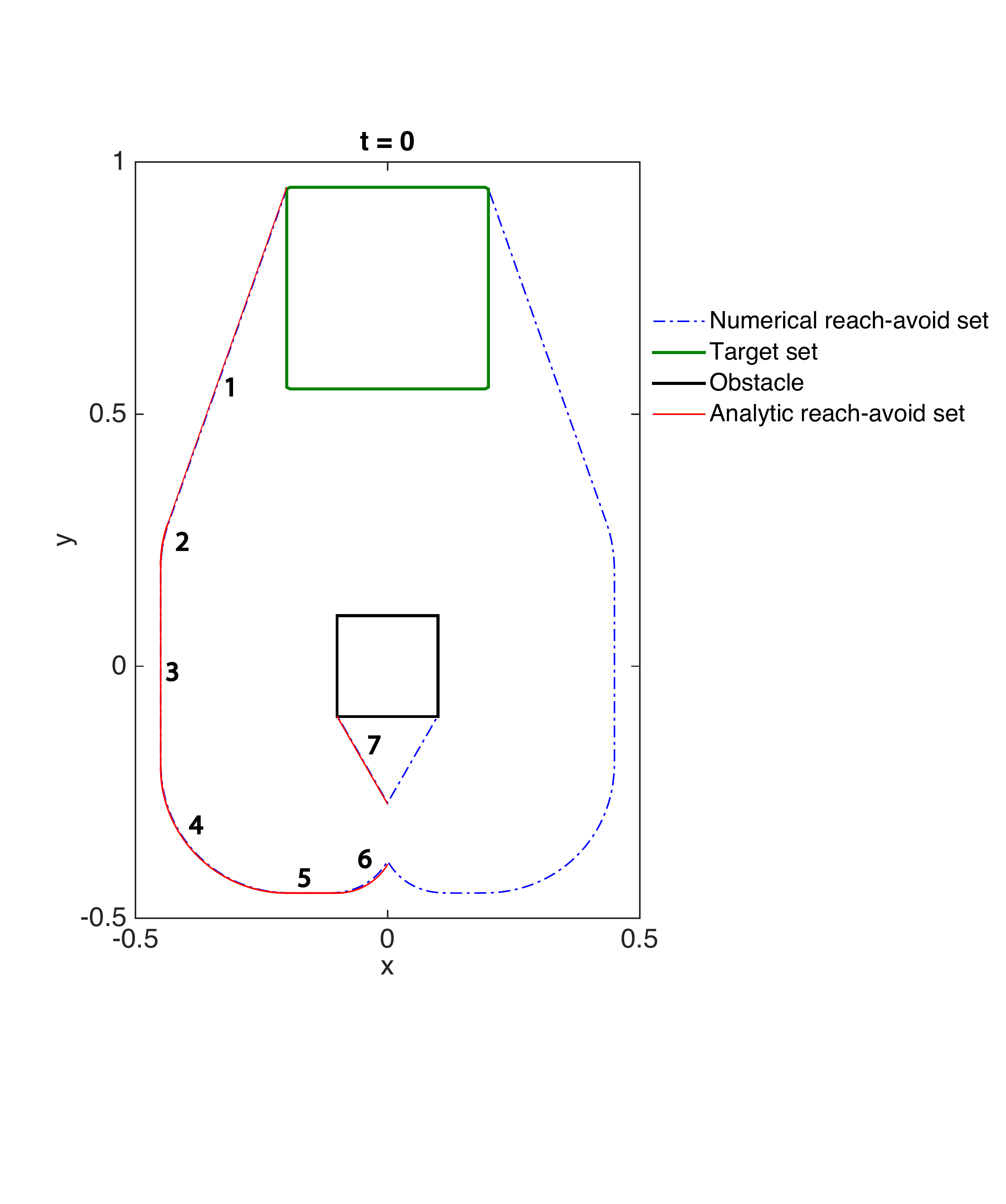}
	\caption{Analytic and numeric reach-avoid set.}
	\label{fig:reach}
\end{figure}

\begin{enumerate}
\item Upper diagonal segment:
\label{seg:top}
\begin{equation}
\{(\x, \y): \y = m(\x + 0.2) + 0.95, \x \in [\x^*, -0.2] \},
\end{equation}
where
\begin{equation}
\begin{aligned}
\x^*&=\frac{-v_{\text{tar}}T}{m^{-1}+m}-0.2,\\
\y^*&=\frac{-v_{\text{tar}}T}{m^{-2}+1}+0.95,\\
m &= \sqrt{\frac{v_{\text{tar}}^2-v_{\text{veh}}^2}{v_{\text{veh}}^2}}.
\end{aligned}
\end{equation}
\item Upper transition arc \label{seg:lostarc} 
\begin{equation}
\begin{aligned}
&\{(\x, \y): (\x+0.2)^2 + (\y-0.2)^2 = \left(v_{\text{veh}}T\right)^2, \\
&\qquad\x\in[-0.45, \x^*], \y\in[0.2, \y^*]   \} ,
\end{aligned}
\end{equation}
%
\item Side straight segment:
\label{seg:sideleft}
%
%
\begin{equation}
\{(\x,\y): \y\in[-0.2, 0.2], \x=-0.45\}
\end{equation}
\item Outer bottom rounded corner:\label{seg:bottomleft}
%
%
\begin{equation}
\begin{aligned}
\{(\x,\y): \x = \cos(\theta)-0.2, \y = \sin(\theta)-0.2, \\ \theta\in[\pi, 3\pi/2]\}
\end{aligned}
\end{equation}
\item Bottom straight segment:
\label{seg:bottomflat}
%
%
\begin{equation}
\{(\x,\y): \x\in[-0.2, -0.1], \y=-0.45\}
\end{equation}
\item Bottom rounded corner under obstacle:
\label{seg:bottomobs}
\begin{equation}
\{(\x,\y): \y = y_{om} - \sqrt{d^2 - (\x-x_{om})^2}, \x\in[x_{om},0]\}
\end{equation}
where $(d,y_{om})$ solves
\begin{equation} \label{eq:pinch}
\begin{aligned}
d + y_{tf} -y_{om} &= C \\
\frac{d}{v_{\text{veh}}} &= \frac{y_{oi}-y_{om}}{v_{\text{obs}}} \\
\end{aligned}
\end{equation}
\item Obstacle's ``shadow": \label{seg:obs}
\begin{equation}
\{(\x, \y): \y = m\x + 0.95, \x \in [-0.1, 0] \}, \\
\end{equation}
where
\begin{equation}
m = \sqrt{\frac{v_{\text{obs}}^2-v_{\text{veh}}^2}{v_{\text{veh}}^2}}.
\end{equation}
\end{enumerate}



\subsubsection{Convergence} \label{subsec:convergence}
Using the scheme described in Section \ref{sec:numerical}, we numerically solved the double-obstacle Hamilton-Jacobi variational inequality (\ref{eq:HJI}) on a computation domain consisting of $N\times N$ grid points for $N=51,101,151,201,251,301$. We compared each of the numerical solutions to the analytic solution derived in Section \ref{subsec:analytic} by the following procedure:

\begin{enumerate}
\item Construct signed distance functions with the zero level set corresponding to the boundary of the numerically computed reach-avoid set (for instance using \cite{Mitchell2004}).
\item Evaluate the signed distance functions at approximately $20\,000$ points distributed on the analytically determined boundary of the reach-avoid set.
\end{enumerate}

The values of the signed distance function correspond to the distance between the analytically computed reach-avoid set boundary points to the numerically computed boundary. These values are used as the error metric for the numerical approximation. 

Figure \ref{fig:convergence} shows in logarithmic scale the mean error and maximum error over all analytic points plotted against the number of grid points per dimension.
An additional line is provided to give the scale of error in terms of the size of spatial discretization or grid spacing. Consistently across the different grid spacings, the mean error is approximately one-tenth of the grid spacing, and the maximum error is approximately half of the grid spacing. The numerical scheme therefore converges both in terms of the mean error and the maximum error. 
\begin{figure}
	\centering
	\includegraphics[trim=7mm 3mm 10mm 7mm, clip=true,width=0.5\textwidth]{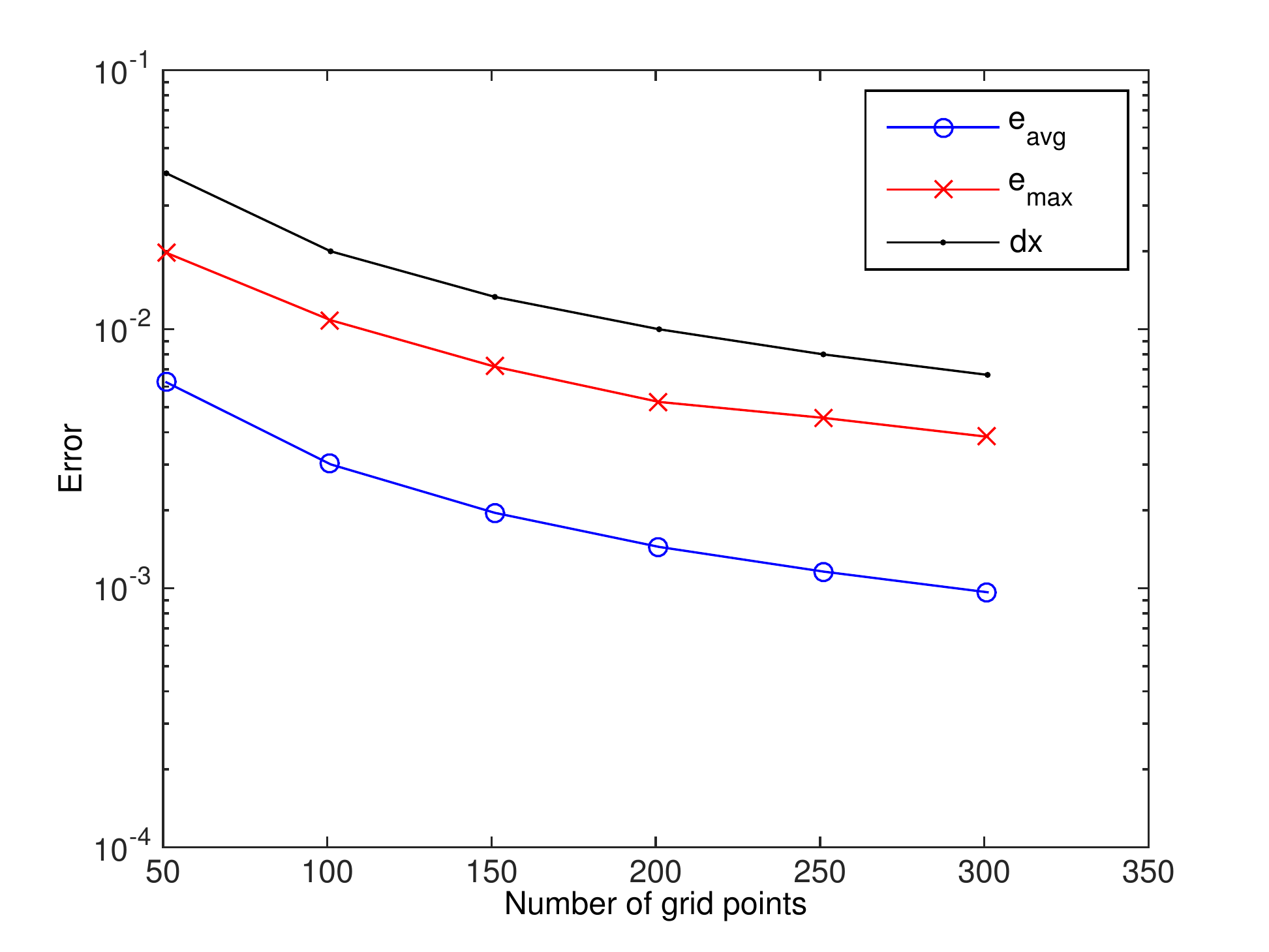}
	\caption{Convergence of our numerical implementation for Example 1 with different grids. Average error is consistently an order of magnitude smaller than the grid spacing, with the maximum error being roughly half of the grid size.}
	\label{fig:convergence}
\end{figure}

\subsection{Example 2: Reach-Avoid Game}
Consider a reach-avoid game in which the attacker  moves in a two-dimensional space while the defender moves on the vertical line $x=0.05$. Let $p_A=(x_A, y_A)$ be the position of the attacker, and $y_D$ be the position of the defender, with $\mathbf{x} = (x_A, y_A, y_D)$ the state of the system, governed by the following dynamics:
\begin{equation}
\begin{split}
\dot{p}_A &= v_A a(t), \quad \|a\|_2\le 1,\\
\dot{y}_D &= v_D b(t), \quad b\in [-1, 1].
\end{split}
\end{equation}
In this reach-avoid game, the attacker wishes to reach a target set that is moving upwards at speed $1.5$, while the defender tries to prevent the attacker from succeeding by intercepting or delaying its advance. The attacker is free to move in any direction at up to some maximum speed, anywhere in a square domain with the exception of a growing obstacle whose lower edge is expanding downwards at a speed of $0.5$. The players have maximum speeds of $v_A = 2, v_D = 3$. Here, interception is defined as the two players being within a radius of $0.1$ of each other. Figure \ref{fig:reachAvoid} shows the initial configuration of the moving target and the moving obstacle, as well the interception set centered at four different defender positions.

For this reach-avoid game, we seek to compute the reach-avoid set, comprised by the set of joint positions from which the attacker is guaranteed to be able to reach the target while avoiding interception by the defender as well as collision with the obstacle. To compute the reach-avoid set, we solve (\ref{eq:HJI}) with the following Hamiltonian:
\begin{equation}
H(\mathbf{x}, \nabla_{\mathbf{x}} V,t) = \min_{a\in \mathcal{A}} \max_{b\in[-1, 1]}\nabla_{p_A} V v_A a(t) + \nabla_{y_D} V v_D b(t).
\end{equation}
Solving the minimax in the Hamiltonian, we get
\begin{equation}
H(x, \nabla_{\mathbf{x}} V,t) = -v_A\|\nabla_{p_A} V\|_2 + v_D |\nabla_{y_D} V|.
\end{equation}

Since the state space of the reach-avoid game is three-dimensional, we visualize two-dimensional cross sections of the three-dimensional reach-avoid set at $t=1$, taken at various defender initial positions. Figure \ref{fig:reachAvoid} compares the two-dimensional slices of the reach-avoid sets computed by the existing state augmentation method and by our newly proposed augmentation-free method. Given the defender positions shown in each of the subplots, the attacker will be able to reach the target if it is on the side of the reach-avoid set boundary containing the target. As can be appreciated, the capture basin boundaries computed by the two methods are very similar (well within a grid cell of distance); however, computation\footnote{Computations were run using \cite{Mitchell2004} on a Lenovo T420s laptop with a Core i7-2640M processor.} using the state augmentation method took approximately 1 hour and 50 minutes on a $45^4$ grid. With our proposed augmentation-free method, computation only took approximately 3 minutes on a $51^3$ grid. Our computation was two orders of magnitude faster and provided essentially the same results.

One can clearly see the effect of the different defender initial positions on the reach-avoid set. If the defender starts the game near the bottom of the domain (top left subplot), the defender would be able to block the attacker from going through the gap between the bottom edge of the obstacle and that of the domain. Thus we see that the reach-avoid set boundary does not extend into the left half quadrant of the domain. However, in this case, the attacker is free to cross the gap above the top edge of the obstacle, which leads to a large area of the top left quadrant being inside the capture basin.

Similarly, if the defender starts near the top of the domain (bottom right subplot), we see a similar but opposite effect. The reach-avoid set extends into the bottom left quadrant of the domain, but less so than it did in the top left quadrant, due to the fact that the passage under the obstacle is closing and the target is moving away from it: an attacker not starting close enough to the opening will either get blocked out or not be able to make it through in time to reach the target. The remaining two subplots (top right and bottom left) show intermediate defender initial positions; in general we see that the reach-avoid set extends more into the top left quadrant of the domain than into the bottom left quadrant.

Figure \ref{fig:reachAvoidTime} shows the backward time evolution of the reach-avoid set for a single defender position. The subplots show the capture basin at various times. At $t=0.92$, there is a relatively small region in the state space from which the attacker can reach the target by the end of the game ($t=1$), as there is little time left. As the starting time $t$ considered decreases, the attacker has more time to reach the target and thus the reach-avoid set grows; however, this growth is inhibited by both the defender's interception set and by the presence of the obstacle.
\begin{figure}
	\centering
	\includegraphics[trim=19mm 5mm 21mm 35mm, clip=true,width=0.425\textwidth]{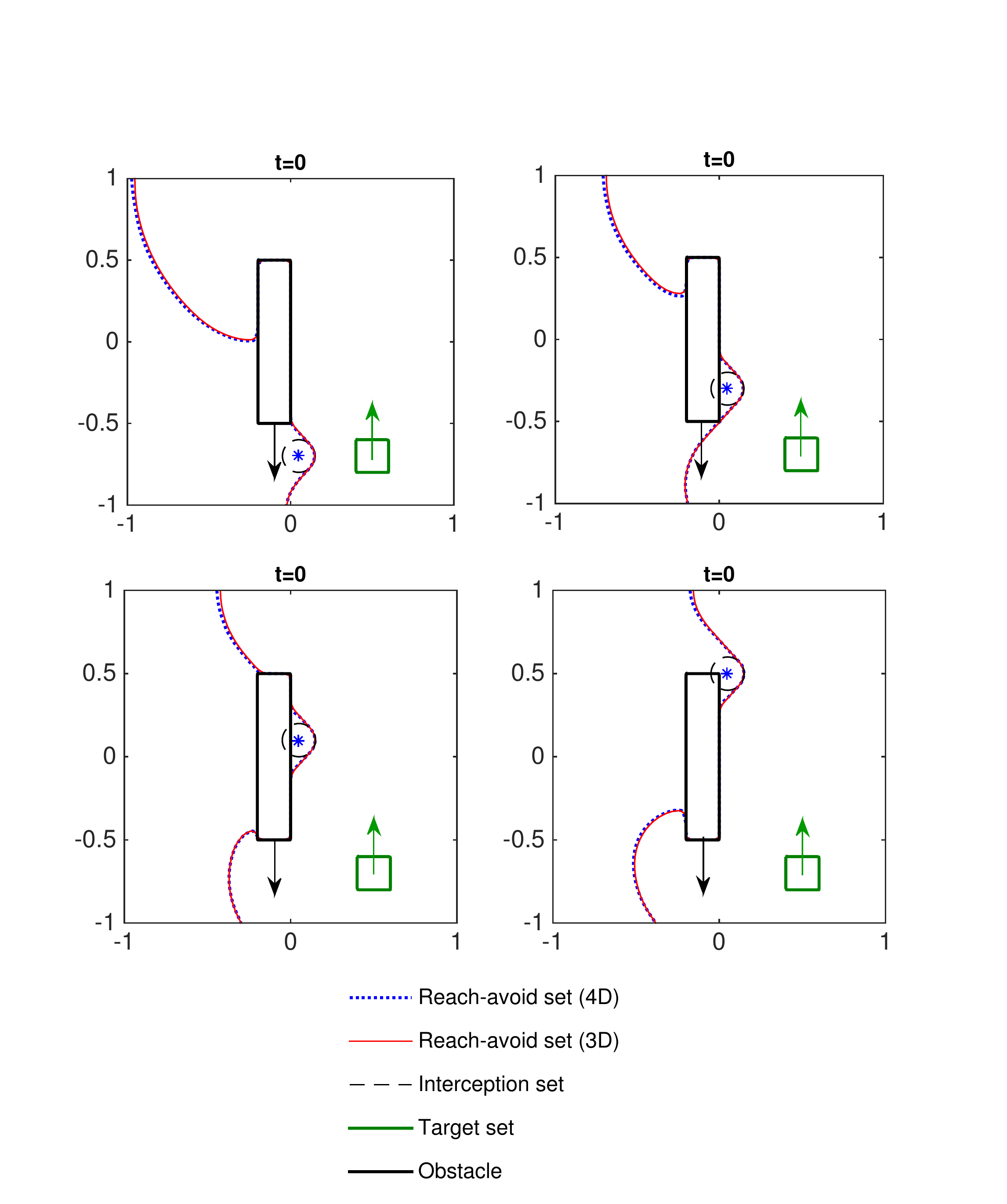}
	\caption{Reach-avoid set computed through the state augmentation method (4D) and our proposed augmentation-free method (3D). 2D cross-sections of the set are shown at the initial time for four different defender positions.}
	\label{fig:reachAvoid}
\end{figure}

\begin{figure}
	\centering
	\includegraphics[trim=10mm 15mm 38mm 7mm, clip=true,width=0.425\textwidth]{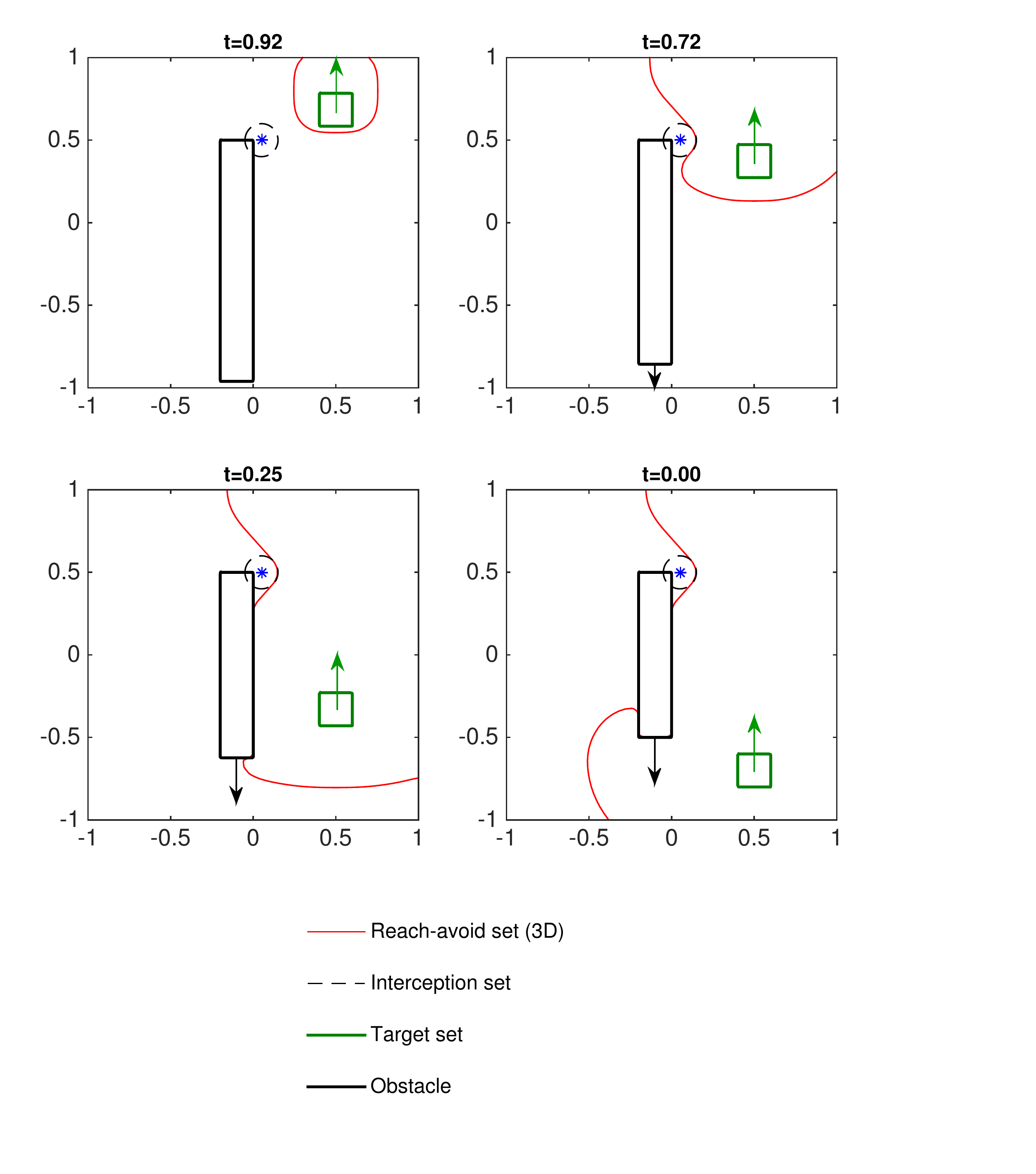}
	\caption{Backward time evolution of the reach-avoid set. As $t$ decreases, the attacker has more time to reach the target, so the reach-avoid set grows. The growth of the reach-avoid set is inhibited by the defender's interception set and the obstacle.}
	\label{fig:reachAvoidTime}
\end{figure}


\section{Conclusion}
We have presented here a novel extension of Hamilton-Jacobi methods to reach-avoid problems with time-varying dynamics, targets, and constraints. This result allows the analysis of many practically relevant problems in game theory and optimal control, including pursuit-evasion, differential games, and safety certificates for dynamical systems. In particular, our result can provide strong guarantees for collision avoidance in dynamic environments with multiple moving obstacles.

Importantly, numerical implementations of our method have computational complexity equivalent to that of already existing techniques for time-invariant systems.
This sets our method apart from previously proposed approaches that work around time variation by incorporating time as an additional variable in the state. 
In many important application contexts, such as online safety analysis in dynamical systems \cite{Akametalu2014}, the substantial reduction in computational cost introduced by our technique can allow timely obtention of results that would otherwise entail an impractical computational effort.

In the future, we intend to develop applications of this new formulation to large-scale multi-agent systems in both cooperative and adversarial contexts. By leveraging the possibilities of time-varying targets and constraints to encode the trajectories of other agents, we hope to incrementally build efficient solutions that scale linearly, and not exponentially, with the complexity of the multi-agent network.


\printbibliography
\end{document}